\documentclass{amsart}

\usepackage{tikz}
\usetikzlibrary{matrix}

\usepackage{amsmath}
\usepackage{listings}
\usepackage{amssymb}
\usepackage{hyperref}
\usepackage{mathtools}
\usepackage{amsthm}
\usepackage{mathrsfs}
\usepackage{xcolor}

\usepackage{graphicx}

\usepackage[shortlabels]{enumitem}

\usepackage[T2A]{fontenc}
\usepackage[utf8]{inputenc}

\newtheorem{thm}{Theorem}[section]

\newtheorem{defn}[thm]{Definition}
\newtheorem{prop}[thm]{Proposition}
\newtheorem{lemma}[thm]{Lemma}

\newtheoremstyle{boldremark}
{\dimexpr\topsep/2\relax} 
{\dimexpr\topsep/2\relax} 
{}          
{}          
{\bfseries} 
{.}         
{.5em}      
{}          

\theoremstyle{boldremark}
\newtheorem{rem}[thm]{Remark}
\newenvironment{remark}
{\pushQED{\qed}\rem}
{\popQED\endrem}

\newtheorem{examplex}[thm]{Example}
\newenvironment{example}
{\pushQED{\qed}\examplex}
{\popQED\endexamplex}

\newcommand{\N}{{\ensuremath{\mathbb{N}}}}

\newcommand{\R}{{\ensuremath{\mathbb{R}}}}

\newcommand{\muzeros}{{\ensuremath{\mu_{\textbf{0}}}}}
\newcommand{\nuzeros}{{\ensuremath{\nu_{\textbf{0}}}}}

\newcommand{\muzerosn}{{\ensuremath{\mu^n_{\textbf{0}}}}}
\newcommand{\nuzerosn}{{\ensuremath{\nu^n_{\textbf{0}}}}}

\newcommand{\supp}{\ensuremath{\textrm{supp}}}

\usepackage{pdfsync}
\title{On a discrete max-plus transportation problem}

\author{Pedro Barrios}
\address [Pedro Barrios]{Universidad de Antioquia,
		Calle 67, No. 53-108,
		Medell{í}n.
		Colombia}
\email[Pedro Barrios]{pdrlsbrrs@gmail.com}

\author{Sergio Mayorga}
\address[Sergio Mayorga]{
Innopolis University, 
Ul. Universitetskaya 1,
Innopolis,
Russian Federation
}
\email[Sergio Mayorga]{me@mayorga.ru}

\author{Eugene Stepanov}
\thanks{The third author acknowledges the MIUR Excellence Department Project awarded to the Department of Mathematics, University of Pisa, CUP I57G22000700001}
\dedicatory{Dedicated to N.N. Uraltseva on the occasion of her 90th birthday}
\address[Eugene Stepanov]{St.Petersburg Branch of the Steklov Mathematical Institute of the Russian Academy of Sciences,
	St.Petersburg, 
	Russian Federation
	\and
	Dipartimento di Matematica, Universit\`a di Pisa,
	Largo Bruno Pontecorvo 5, 56127 Pisa, Italy
	\and
	HSE University, Moscow, Russian Federation		
}
\email{stepanov.eugene@gmail.com}

\date{}
\begin{document}
	
	\begin{abstract}
		We provide an explicit algorithm to solve
		the idempotent analogue of the discrete
		Monge-Kantorovich optimal mass transportation
		problem with the usual real number field replaced
		by the tropical (max-plus) semiring,
		in which addition is defined as the maximum and
		product is defined as usual addition, with
		$-\infty$ and $0$ playing the roles
		of additive and multiplicative identities.
		Such a problem may be naturally called tropical
		or ``max-plus'' optimal transportation problem.
		We show that the solutions to the latter,
		called the optimal tropical plans, may not correspond to
		perfect matchings even if the data (max-plus probability measures)
		have all weights equal to zero,
		in contrast with the classical discrete optimal transportation analogue,
		where perfect matching optimal plans in similar situations always exist.
		Nevertheless, in some randomized situation the
		existence of
		perfect matching optimal tropical plans may occur rather frequently.
		At last, we prove that the uniqueness of solutions of the optimal
		tropical transportation problem is quite rare.    
	\end{abstract}
	
	\keywords{optimal transportation, tropical semiring, idempotent analysis}
	
	\maketitle
	
	\section{Introduction}
	
	In this paper we consider a discrete optimization problem
	that looks quite similar to the classical Monge-Kantorovich
	optimal mass 
	transportation problem and in fact, as we argue later,
	is nothing else but the idempotent version of the latter.
	We begin with a short motivational introduction.
	
	\subsection{Motivation of the problem} Suppose
	we have $m$ signal sources
	and $n$ receivers regularly exchanging information between them.  Each 
	source 
	$i\in \{1,\ldots, m\}$ may
	transmit an amount $h_{i,j}$ of information to
	$j\in \{1,\ldots, n\}$.
	The maximum amount of information the source $i$ may send at one time
	is given by a number $k_i$, that is,
	\begin{equation}\label{eq_maxj1}
		\max_{j\in \{1,\ldots, n\}} h_{i,j}= k_i.
	\end{equation}
	Analogously,
	the maximum amount of information the receiver $j$ may get at one time
	is given by a number $l_j$, that is,
	\begin{equation}\label{eq_maxi1}
		\max_{i\in \{1,\ldots, m\}} h_{i,j}= l_j.
	\end{equation}
	Of course,~\eqref{eq_maxj1} and~\eqref{eq_maxi1} may only be
	simultaneously valid if
	\begin{equation}\label{eq_tropmasseq1}
		\max_{i\in \{1,\ldots, n\}} k_i = \max_{j\in \{1,\ldots, n\}} l_j .
	\end{equation}
	The cost $C_{i,j}$ of transmitting
	between the source $i$ and the receiver $j$ depends affinely
	on the amount of transmitted information and takes into account
	the known fixed cost 
	$g_{i,j}$ of using the communication channel between them, that is,
	\[ C_{i,j} = g_{i,j} + \gamma h_{i,j}
	\] 
	for some given coefficient $\gamma>0$.
	The goal is to find the values
	$h_{i,j}$, $i=1,\ldots, n$,  $j=1,\ldots, m$
	(the respective matrix being further called the optimal tropical
	transportation plan, the explanation of the terminology
	being given in the sequel) minimizing the
	maximum of $C_{i,j}$ over all $i$ and $j$, that is,
	finding the
	\[
	\inf \{\max_{i,j}  (g_{i,j} + \gamma h_{i,j}) \colon \mbox{$h_{i,j}$ satisfying~\eqref{eq_maxj1} and~\eqref{eq_maxi1}}  \}.
	\] 
	Denoting $c_{i,j} := g_{i,j}/\gamma$, this amounts to solving
	\begin{equation}\label{eq_pb0maxplus}
		\inf \{\max_{i,j}  (c_{i,j} + h_{i,j}) 
		\colon \mbox{$h_{i,j} $ satisfying~\eqref{eq_maxj1} and~\eqref{eq_maxi1}}\}.
	\end{equation}

	\subsection{Idempotent (max-plus or tropical)
		interpretation}
	Let us now completely
	change the point of view and look at the above problem
	as a version of the classical optimal mass
	transportation problem 
	in the context of 
		\emph{idempotent analysis}:
	more precisely,
	analysis over the tropical (max-plus) semiring
	\(\bar{\R}_{-}:=\R\cup\{-\infty\}\)
	endowed with the operations
	\[
	a\oplus b := \max{\{a,b\}},\quad 
	a \otimes b := a + b,
	\]
	which substitute the usual addition and
	multiplication of real numbers respectively. 
	The value $-\infty$ is an identity with respect
	to $\oplus$ and $0$ is an identity with
	respect to $\otimes$.
	Both operations are commutative,
	associative and $a\otimes (b \oplus c)
	= a\otimes b + a\otimes c$. 
	Thus the roles of $0$ and $1$ on the usual real line 
	are played here by $-\infty$ and $0$ respectively.
	For a general overview of idempotent analysis we refer the
	reader to the classic book~\cite{kolokoltsov1997idempotent}.
	
	The classical discrete Monge-Kantorovich
	optimal mass transportation problem
	(see, e.g.~\cite{Villani06omt} for a comprehensive
	introduction to the subject)
		is that of
		finding the optimal plan of transportation
		in the following sense:
		solve the minimization problem
		\begin{equation}\label{eq_pb0transport}
			\inf\big\{ 
			\sum_{i,j=1}^{m,n}  c_{i,j} \pi_{i,j} \
			\colon
			\ 
			[\pi_{i,j}]_{i,j=1}^{m,n}
			\big\}
		\end{equation}
		where the infimimum is performed over \(m\)-by-\(n\) matrices
		\([\pi_{i,j}]_{i,j=1}^{m,n}\) which satisfy the constraints
		\begin{eqnarray}\label{eq_trj1}
			\sum_{j=1}^n \pi_{i,j}= k_i,\\
			\label{eq_tri1}
			\sum_{i=1}^m \pi_{i,j}= l_j,
		\end{eqnarray}
		with the numbers $k_i$, $l_j$, $i=1,\ldots,m$, $j=1,\ldots,n$
		all fixed.
	This is usually interpreted as finding the way of optimally
	transporting the discrete measure
	\[\mu:=\sum_{i=1}^{m} k_i \delta_{x_i}\] to another  discrete measure
	\[\nu:=\sum_{j=1}^{n} l_j \delta_{y_i},\] for some $x_i\in X$, $y_j\in Y$, $i=1,\ldots,m$, $j=1,\ldots,n$, with $X$ and $Y$ some sets and
	$\delta_z$ standing for the Dirac point mass at \(z\).

		The value $\pi_{i,j}$ is, then, interpreted as the amount of
		mass transported from $x_i$ to $y_j$. 
		The matrix  \([\pi_{i,j}]_{i,j=1}^{m,n}\)
		is identified with the discrete measure
		\(\pi= \sum_{i,j=1}^{m,n} \pi_{i,j}\delta_{(x_i, y_j)}\)
		over $X\times Y$;
		constraints \eqref{eq_trj1} and \eqref{eq_tri1} now mean
		that the marginals (or projections)
		of the measure \(\pi\) along \(X\) and \(Y\)
		are \(\mu\) and \(\nu\) respectively.
		The quantity
		\(\sum_{i,j=1}^{m,n}  c_{i,j} \pi_{i,j}\)
		is the total transportation cost, targeted for minimization.
	
	In the idempotent max-plus setting the
	role of the Dirac measure $\delta_z$
		over an arbitrary set \(Z\) 
		concentrated at a point $z\in Z$
		is played by the characteristic function
		(for which we retain the same notation as for the Dirac measure)
		\(\delta_z\) defined by
		\[
		\delta_z (z') := 
		\begin{cases}
			0, & z' = z,
			\\
			-\infty, & z'\neq z.
		\end{cases}
		\]
		The analogues of sums of Dirac masses on 
		sets \(X\) and \(Y\) are the functions
		on these sets
		respectively 
		defined by  
		\begin{equation}\label{eq:formmunu}
			\mu(x) := \max\limits_{i=1,\ldots, m}(k_i + \delta_{x_i}(x)), \qquad 
			\nu(y) = \max\limits_{j=1,\ldots, n} (l_j + \delta_{y_j}(y)),
	\end{equation} 
	i.e.\ $\mu$ is the function taking the value $k_i$
	at each $x_i$ and $-\infty$ elsewhere,
	and $\nu$ is the function taking the value $l_j$ at
	each $y_i$ and $-\infty$ elsewhere; 
		the analogue of a discrete measure represented by a sum of Dirac masses with 
		weights \(h_{ij}\) at points 
		\((x_i,y_j)\in X\times Y\)  is the function
		\begin{equation}\label{eq:forpi}
			\pi(x,y) = \max_{\substack{i=1,\ldots,m \\ j=1,\ldots,n}}
			\left( h_{ij} + \delta_{(x_i,y_j)}(x,y) \right).
	\end{equation}
	We will be referring to the coefficients $k_i$ as
	the \textit{weights} of $\mu$ and to the coefficients $l_j$
	as the weights of $\nu$.
		The total mass of a discrete measure,
		which in the traditional setting is the sum of its weights,
		corresponds, in the max-plus setting, to the maximum of its weights, i.e.\
	\[
	|\mu| := \max\limits_{i=1,\ldots, m}k_i , \qquad 
	|\nu|: = \max\limits_{j=1,\ldots, n} l_j .
	\] 
	We will assume, in complete analogy with the
	classical mass transportation theory,
	that 
		\(|\mu|=|\nu|\),
	which is exactly the condition~\eqref{eq_tropmasseq1},
	and for purely aesthetical reasons,
	which imply no loss of generality, 
	we also assume that both total masses are zero, 
	i.~e.~$|\mu|=|\nu|=0$, so that $\mu$ and $\nu$ can
	be considered tropical versions of discrete
	\textit{probability} measures.
		We will call, therefore, functions such as
		\(\mu\), \(\nu\), \(\pi\) above 
		\textit{discrete max-plus probability measures}, the set of such
		functions over a
		given set $Z$ being denoted $\mathcal{M}(Z)$, so that
		$\mu\in \mathcal{M}(X)$ and $\nu\in \mathcal{M}(Y)$,
		\(\pi\in \mathcal{M}(X\times Y)\).

		Suppose now that 
		\(\{x_i\}_{i=1}^m\subset  X\), 
		\(\{y_j\}_{j=1}^n\subset  Y\)
		are given, and 
		\(\mu\), \(\nu\) are defined by~\eqref{eq:formmunu} and
		\eqref{eq:forpi} respectively. The max-plus, or tropical, analogue
		of a transport plan between \(\mu\) and \(\nu\) is a function
		\(\pi\) defined as in \eqref{eq:forpi} and satisfying the constraints
		\begin{equation}\label{eq_planpi1}
			\max_{x\in X}\pi(x,y) = \nu(y), \quad \max_{y\in Y}\pi(x,y) = \nu(x),
		\end{equation}	
		The Monge-Kantorovich
		optimal transportation problem~\eqref{eq_pb0transport} with the given cost function 
		$c\colon X\times Y \to \bar \R$ 
		then becomes, in the max-plus setting, the problem of
		solving
		\begin{equation}\label{eq:xyzw}
			\inf\left\{\max(  c(x,y) + \pi(x,y) )\colon \pi\in
			\mathcal{M}(X\times Y)\, \mbox{ satisfies~\eqref{eq_planpi1}}\right\}.
		\end{equation}

		It is worth mentioning that the problem just stated is not the 
		unique example of a meaningful idempotent
		(max-plus or tropical) version of 
		a classical optimization problem; 
		similar tropical formulations have arisen elsewhere
		in the literature. For instance, this
		is the case of the so-called bottleneck traveling salesman problem
		(see e.g. section~8 of~\cite{GimGomor64-tsp}
		or~\cite{GarfGilb78-bottlenecktsp}), which can be
		considered a max-plus version of the classical traveling salesman
		problem. 
	
		We will further identify, whenever convenient, max-plus
		discrete probability measures with the
		sequences of their weights,
		and the transport plan $\pi$ (given by~\eqref{eq:forpi})
		with 
		the matrix of coefficients \([h_{i,j}]\),
		and refer to this object in either interpretation as a
		\emph{tropical transport plan} for the discrete
		max-plus probability measures $\mu$ and $\nu$
		(or, equivalently, for the sequences of their weights)
		whenever~\eqref{eq_planpi1} holds,
		which in terms of the matrix \([h_{i,j}]\) amounts
		precisely to~\eqref{eq_maxj1} and~\eqref{eq_maxi1}, namely,
		that maximum of the \(i\)-th row of the matrix must be \(k_i\)
		and the maximum of the \(j\)-th row of the matrix must be \(l_j\).
		If we write \(c_{i,j}:=c(x_i,y_j)\), then,
		in view of \eqref{eq:formmunu} and \eqref{eq:forpi}, the problem
		\eqref{eq:xyzw}
		becomes exactly~\eqref{eq_pb0maxplus}, 
		which is the reason why it may be considered 
		as the max-plus version of the
		Monge-Kantorovich problem~\eqref{eq_pb0transport}.
		Such an identification of measures with weights,
		plans and cost functions with matrices is quite natural
		in the discrete setting we are considering here,
		especially when the points $x_i$ and $y_j$ themselves are of
		no practical importance.
	
	\subsection{Our contribution}
	In this paper we provide an
	explicit algorithm to solve the optimal tropical transportation
	problem~\eqref{eq_pb0maxplus} and find an explicit formula
	for the optimal tropical cost, i.~e.~the value of \eqref{eq_pb0maxplus}.
	As a consequence, we obtain some
	curious results on the optimal
	tropical plans and values. In particular,
	\begin{itemize}
		\item In the case $m=n$ optimal tropical
		plans corresponding to perfect matchings
		(those given by permutation matrices)
		may
		not exist even if the max-plus probability measures $\mu$
		and $\nu$ have all the weights equal to zero
		(we henceforth call this case \textit{fundamental}), see Example \ref{example:ultrametricnotenough} below.
		This is a stark contrast with classical optimal
			mass transportation theory, 
			where (again with $m=n$) perfect matching optimal transport
			plans between sums of Dirac masses with equal
			weights always exist. Nevertheless,
			it turns out
			that, at least in the fundamental case,
			the existence of perfect
			matching optimal tropical plans occurs rather
			frequently as the number of weights of
			both $\mu$ and $\nu$ becomes large, the respective statement being made precise by introducing randomness in the cost.
		More precisely, under a concrete randomization of the cost matrix,
			the existence of a perfect matching optimal plan is ``asymptotically
			almost sure'' as the number of weights of the measures approaches
			infinity. This is Theorem \ref{th:pmtendstoone} below.
			\item In the fundamental case,
			under the same type of randomization of the cost matrix,
			the optimal tropical cost is, asymptotically almost surely,
			the lowest value among all the entries of the cost matrix.
			This is the content of Theorem \ref{thm:bernoulli1} and
			Remark \ref{rem-binomial} below.
			\item 
			We also prove that uniqueness of an optimal
			tropical plan asymptotically almost surely fails to occur
			(in the fundamental case), when the cost matrix entries are
			sampled uniformly. This is Theorem \ref{thm:rarity1} below.
	\end{itemize}
	
	\section{Notation and preliminaries}
	
	In complete analogy with the classical optimal transportation theory,
	the matrix \([h_{i,j}[_{i,j=1}^{m,n}\) with each
	$h_{i,j}\in [-\infty,0]$ satisfying~\eqref{eq_maxj1}
	and~\eqref{eq_maxi1} will be called discrete
	\textit{max-plus (or tropical)  
		plan} (or just a \textit{plan} for brevity) for
	max-plus discrete probability measures
	$\mu\in\mathcal{M}(X),$ $\nu\in\mathcal{M}(Y)$.
		Equivalently,
		as remarked earlier,
		it can be seen as a
		max-plus discrete probability measure
		in the sense given by \eqref{eq:forpi}.
	We denote by $\Pi(\mu,\nu)$  the set of all such plans
	(which is always
	nonempty, since
	$\mu\otimes \nu\in \Pi(\mu,\nu)$, where $(\mu\otimes \nu)_{i,j}:=k_i+l_j$).
	
	For the given cost matrix \([c_{i,j}]_{i,j=1}^{m,n}\)
	we define
	\begin{align*}
		d_c(\mu,\nu) := \ \inf\left\{ \max_{\substack{i=1,\ldots, m\\ j=1,\ldots, n}}(c_{i,j}+h_{i,j})
		\colon h \in \Pi(\mu,\nu) \right\}.
	\end{align*}
	If we interpret $h$ as an element of 
	$h\in \mathcal{M}(X\times Y)$, i.e.\
	as in \eqref{eq:forpi},
	then we may write
	$h(x_i,y_j)$ and $c(x_i,y_j)$ instead of $h_{i,j}$
	and $c_{i,j}$ respectively, since the points \(x_i\) 
	and \(y_j\) can be assumed fixed in every discussion.
	Again for purely aesthetical reasons, and to allow for
	the interpretation of the numbers $c_{i,j}$ as representing a cost,
	it is convenient to assume
	$c_{i,j}\geq 0$, which can always be done without
	loss of generality. 
	The minimizer  \(h\in\Pi(\mu,\nu)\) in the above
	problem will be called the
	\textit{minimizing (or optimal) tropical plan},
	the set of such minimizing plans being
	denoted by $\Pi^c(\mu,\nu)$. 
		The number \(d_c(\mu,\nu)\) will be called the
		\textit{optimal tropical cost} between
		\(\mu\) and \(\nu\). We must say that, despite our choice
		of notation, the function \(d_c(\cdot,\cdot)\) is not a
		metric.
	
	In the sequel we assume the sequences of weights \(k_j\)
	and \(l_j\) to be ordered in decreasing order
		$k_1=l_1=0$, i.e.\
	\begin{equation}\label{eq:standingassumption}
		k_{n} \leq k_{n-1} \leq \cdots \leq k_1 = 0, 
		\quad 
		l_{n} \leq l_{n-1} \leq \cdots \leq l_1 = 0.   
	\end{equation}
	We denote by 
	\(\Lambda(\mu)\) and \(\Lambda(\nu)\) the 
	sets of weights of \(\mu\) and \(\nu\)
	respectively. If we wish to retain the
		interpretation of \(\mu\) and \(\nu\) as elements of
		\(\mathcal{M}(X)\) and \(\mathcal{M}(Y)\) respectively, then
		\eqref{eq:standingassumption} is achieved simply by a relabeling
		of the fixed points \(x_i\) and \(y_j\), \(i\in\{1,\ldots,m\}\),
		\(j\in \{1,\ldots,n\}\).
	
	For any \(h\in \Pi(\mu,\nu)\), by the
	\textit{support} of \(h\), denoted \(\supp(h)\), we will mean
	the subset of \(X\times Y\) of 
	points \((x,y)\) where \(h(x,y) > -\infty\), or (again, equivalently,
	since any such point must be one of the pairs \((x_i,y_j)\))
	with the set of pairs 
	\((i,j)\in \{1,\ldots,m\}\times \{1,\ldots,n\} \)
	such that \(h_{i,j} > -\infty\).
		In the latter case, we may also write \(h(i,j)\) rather
		than \(h_{ij}\) (for instance, if we wish to free up the subindex
		place for another purpose, as in section
		\ref{subsection:findingoptimalcost} below).
	
	For a set $X$ we denote by $\# X$ its
	cardinality.
	We also write sometimes $a\vee b$ for the maximum of the numbers
	$a$ and $b$.
	
	\section{Reduced transportation plans and existence of minimizers}
	
	We start with the following definition.
	\begin{defn}\label{def_reducedplan1}
		Given fixed discrete max-plus probability measures
		\(\mu\) and \(\nu\),
		we will call a tropical plan \(h\in \Pi(\mu,\nu)\) 
		\emph{reduced} if 
		for each \(i,j\) such that \(h_{i,j}>-\infty\), 
		the element \(h_{i,j}\) is a \emph{strict} 
		maximum in its row \emph{or} in its column, and denote
		by \(\Pi_{R}(\mu,\nu)\) the set of reduced plans
		for discrete \(\mu\) and \(\nu\).
	\end{defn}	
	Without loss of
	generality for the optimal tropical transportation problem,
	all the weights of \(\mu\) and
	\(\nu\) can be taken to be finite (i.~e.~\(>-\infty\)). 
	In fact, if, say, \(k_i=-\infty\) for some \(i\in\{1,\ldots,m\}\), 
	then the \(i\)-th row of \({h}\), 
	for any \({h}\in \Pi(\mu,\nu)\) 
	must consist only of \(-\infty\). In this case,
	in the expression that defines 
	\(d_c(\mu,\nu)\),
	each of the elements over which the minimum is taken is
	\begin{align*}
		& \max_{(i,j)} ({h}_{i,j}+ c_{i,j}) 
		\\
		& \ =  
		\max\{ \ldots, h_{i,1} + c_{i,1}, h_{i,2} + c_{i,2}, \ldots ,
		h_{i,n} + c_{i,n}, \ldots \}
		\\
		& \  = \max\{ \ldots, -\infty, -\infty, \ldots
		-\infty, \ldots \},
	\end{align*}
	but the maximum is non-negative, so the
	the numbers \(-\infty\) can 
	be changed to sufficiently small negative numbers 
	(negative but with large absolute value) without affecting
	the maximum and then the weight \(k_j=-\infty\)
	can be changed to \(\max_i h_{i,j}\) 
	where \(h_{i,j}\) are the new numbers just mentioned.
	
	The following assertion holds true.
	
	\begin{lemma}\label{lemma:red}
		For all discrete \(\mu\in \mathcal{M}(X)\), 
		\(\nu\in\mathcal{M}(Y)\) one has
		\begin{align*}
			d_c(\mu,\nu) = 
			\inf\{ \max_{(i,j)}(h_{i,j} + c_{i,j})
			\colon  \ h\in \Pi_R(\mu,\nu)\}.
		\end{align*}
		Moreover, for every minimizing plan $h$
		there is a reduced minimizing plan $\tilde h$ with
		$\supp\, \tilde h \subset \supp\, h$ and $\tilde h =h$
		on the support of $\tilde h$. 
	\end{lemma}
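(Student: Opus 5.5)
The plan is to show that every plan $h\in\Pi(\mu,\nu)$ can be turned into a reduced plan $\tilde h\in\Pi_R(\mu,\nu)$ by setting some of its finite entries to $-\infty$, keeping every other entry unchanged. Such a $\tilde h$ has $\supp\,\tilde h\subset\supp\, h$, agrees with $h$ on its own support, and satisfies $\tilde h\le h$ entrywise. Hence
\[
\max_{(i,j)}(\tilde h_{i,j}+c_{i,j})\le \max_{(i,j)}(h_{i,j}+c_{i,j}).
\]
Since $\Pi_R(\mu,\nu)\subset\Pi(\mu,\nu)$ gives the reverse inequality between the two infima, the equality of infima follows. Applying the construction to a minimizing $h$ yields a reduced $\tilde h$ with cost at most $d_c(\mu,\nu)$, so $\tilde h$ is minimizing as well, which is the second assertion.

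The construction is a deletion procedure. Call a finite entry $h_{i,j}$ \emph{removable} if it is not a strict maximum of its row and not a strict maximum of its column. Then row $i$ contains another entry $h_{i,j'}\ge h_{i,j}$, and column $j$ contains another entry $h_{i',j}\ge h_{i,j}$.
\begin{itemize}
\item Replacing $h_{i,j}$ by $-\infty$ leaves the maximum of row $i$ unchanged. If $h_{i,j}$ attains the row maximum $k_i$, that value is still attained at $(i,j')$; otherwise the maximum is attained elsewhere anyway.
\item By the same argument, the maximum $l_j$ of column $j$ is unchanged.
\item All other rows and columns are untouched, so the new matrix still lies in $\Pi(\mu,\nu)$.
\end{itemize}
Each deletion strictly decreases the size of the (finite) support. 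After finitely many deletions, starting from $h$, we reach a plan $\tilde h$ with no removable entries, that is, a reduced plan. At every stage the current plan is obtained from $h$ by turning entries into $-\infty$, so the support and agreement properties hold automatically.

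The only delicate point is checking that a single deletion preserves the marginal constraints. This needs the competitor entry to be in the same row (respectively column) and to be at least as large, which is exactly the negation of strictness. Deletions cannot break previously verified conditions in a harmful way: a later deletion may create strict maxima, but it never destroys the plan property. So termination plus the final "no removable entry" state suffices, and no invariant beyond membership in $\Pi(\mu,\nu)$ is needed.

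Finally, we use the reduction remarked before the lemma, namely that weights may be taken finite. This guarantees that all rows and columns contain finite entries, although the argument above does not actually depend on it.
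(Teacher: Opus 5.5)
Your proof is correct and uses the same idea as the paper's proof: a finite entry that is a strict maximum neither of its row nor of its column can be lowered to $-\infty$ without increasing $\max_{(i,j)}(h_{i,j}+c_{i,j})$.

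Where you differ is in carrying this out. The paper's proof says to change ``all such entries'' and never checks that the result is still a plan. Read literally, as a simultaneous change, this fails. For $k_1=k_2=l_1=l_2=0$ and $h\equiv 0$ on the $2\times 2$ grid, every entry is tied in both its row and its column, so deleting them all at once gives the identically $-\infty$ matrix, which is not a plan.

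Your version fixes this in three ways. You delete one entry at a time and re-evaluate after each deletion. You explicitly check that a tied competitor in the same row and the same column keeps $k_i$ and $l_j$ attained. You get termination because the support shrinks at each step. This is exactly what the argument needs to be rigorous, and it also gives the support and agreement claims of the second assertion directly, which the paper leaves implicit.

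Your final paragraph about finite weights can simply be dropped, since, as you note, the argument does not use it.
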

	
	\begin{proof}
		If
		\(h_{i,j}\) is not a strict maximum neither in its
		column nor in its row for some \(i,j \in \{1,\ldots,n\}\), then
		changing  \(h_{i,j}\) to $-\infty$  (or to any number less than 
		\(h_{i,j}\))  can only decrease
		\(\max_{(i,j)}(h_{i,j} + c_{i,j})\).
		Changing all such entries of the matrix 
		\([h_{i,j}]\) will transform the plan to
		a reduced one, and thus
		\begin{align*}
			d_c(\mu,\nu) & = 
			\inf\{ \max_{(i,j)}(h_{i,j} + c_{i,j})
			\colon  \ h\in \Pi(\mu,\nu)\}\\
			& =	
			\inf\{ \max_{(i,j)}(h_{i,j} + c_{i,j})
			\colon  \ h\in \Pi_R(\mu,\nu)\}
		\end{align*}
		as claimed.
	\end{proof}
	As a consequence, the following existence result holds.
	\begin{thm}\label{thm:infismin} 
		The discrete 
		max-plus transportation problem admits a solution, namely,
		\(\inf\) is actually a \(\min\).
	\end{thm}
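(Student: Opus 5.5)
By Lemma \ref{lemma:red}, the infimum defining \(d_c(\mu,\nu)\) may be taken over reduced plans \(\Pi_R(\mu,\nu)\) only. The plan is to show that, with the weights finite (which we may assume by the remark preceding Lemma \ref{lemma:red}), every reduced plan has entries taken from the finite set \(\Lambda(\mu)\cup\Lambda(\nu)\cup\{-\infty\}\). Then \(\Pi_R(\mu,\nu)\) is a finite set, nonempty by the third step below, and the infimum over it is attained.

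First, the key observation. Let \(h\in\Pi_R(\mu,\nu)\) and \(h_{i,j}>-\infty\). By Definition \ref{def_reducedplan1}, \(h_{i,j}\) is a strict maximum of its row or of its column. If it is the maximum of row \(i\), then \eqref{eq_maxj1} forces \(h_{i,j}=k_i\). If it is the maximum of column \(j\), then \eqref{eq_maxi1} forces \(h_{i,j}=l_j\). So every entry of \(h\) lies in the finite set \(\{k_1,\dots,k_m,l_1,\dots,l_n,-\infty\}\), and there are at most \((m+n+1)^{mn}\) reduced plans.

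Second, the infimum over a nonempty finite set is a minimum. Take \(h^*\in\Pi_R(\mu,\nu)\) attaining \(\min_{h\in\Pi_R}\max_{i,j}(h_{i,j}+c_{i,j})\). By Lemma \ref{lemma:red}, this value equals \(d_c(\mu,\nu)\). Since \(h^*\in\Pi(\mu,\nu)\), it is a minimizer of the original problem.

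Third, nonemptiness of \(\Pi_R(\mu,\nu)\). Start from \(\mu\otimes\nu\in\Pi(\mu,\nu)\) and apply the reduction procedure from the proof of Lemma \ref{lemma:red}, which replaces by \(-\infty\) the entries that are strict maxima of neither their row nor their column. We must check that the result is still a plan, i.e.\ that row and column maxima are preserved. This is the only delicate point, and the lemma's proof passes over it. If several entries tie for a row maximum, deleting all of them destroys the constraint. The repair is to proceed one entry at a time. An entry that is a non-strict maximum of its row (or column) has another entry achieving the same value in that line, so deleting it alone preserves that line's maximum. Entries that are not maximal in their row or column can always be deleted safely. Each deletion keeps the matrix in \(\Pi(\mu,\nu)\) and does not increase the cost. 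We repeat until every finite entry is a strict maximum in its row or column; the process terminates because the support strictly shrinks.

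I expect this third step, the careful sequential reduction, to be the main obstacle. Once it is done, the finiteness argument closes the proof immediately.
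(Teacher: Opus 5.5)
Your proof is correct and follows the paper's argument: reduce to $\Pi_R(\mu,\nu)$ via Lemma~\ref{lemma:red}, then note that every finite entry of a reduced plan is a strict row or column maximum and hence equals some $k_i$ or $l_j$, so $\Pi_R(\mu,\nu)$ is finite and the minimum is attained. Your third step adds something real. The simultaneous deletion in the proof of Lemma~\ref{lemma:red} can indeed destroy the constraints, for example on the all-zero $2\times 2$ plan in the fundamental case. Your one-entry-at-a-time deletion repairs this, and applied to an arbitrary plan rather than just $\mu\otimes\nu$ it proves both assertions of that lemma.
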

	
	\begin{proof}
		It is enough to refer to Lemma~\ref{lemma:red} and
		observe that the set of reduced plans 
		\(\Pi_R(\mu,\nu)\) has finitely many elements
			(indeed, each entry of a reduced plan must be
			either \(-\infty\) or one of the weights of
			\(\mu\) and \(\nu\)).
	\end{proof}
	
	\section{Algorithm to solve the discrete max-plus transportation problem}
	
	\subsection{Partition of the support of a plan}
	
	Given discrete \(\mu\) and \(\nu\), for each 
	\(i\in \{1,\ldots,m\}\), \(j\in \{1,\ldots,n\}\), let
	\begin{align*}
		p_i = \max\{j \colon  \ l_j \geq k_i\}, 
		\quad 
		q_j = \max\{i \colon  \ k_i \geq l_j\}, 
		\\
		S_i = \{(i,1),\ldots,(i,p_i)\},
		\quad
		T_j = \{(1,j),\ldots,(q_j,j)\}.  
	\end{align*}
	The following statement gives some information
		on the general structure of reduced plans, as long as we
		adhere to the convention that the weights are sorted as in
		\eqref{eq:standingassumption}, which we agreed to hold throughout.
	\begin{lemma}\label{lemma:positions1}
		Let \(\mu\in\mathcal{M}(X)\),
		\(\nu\in\mathcal{M}(Y)\) be
		discrete max-plus probability measures as
		in \eqref{eq:formmunu} and let
		\(h \in \Pi_R(\mu,\nu)\).
			Assume, without loss of generality, that the weights
			of the measures satisfy \eqref{eq:standingassumption}.
		The following assertions hold true.
		\begin{enumerate}
			\item For each \(i\in\{1,\ldots,m\}\), 
			at least one of the numbers \(h_{i,1},\ldots,h_{i,p_i}\)
			must be \(k_i\), and the numbers \(h_{i,p_i+1},\ldots,h_{i,n}\)
			are all strictly less than \(k_i\). Likewise,
			for each \(j\in\{1,\ldots,n\}\), at least one 
			of the numbers \(h_{1,j},\ldots,h_{q_{i},j}\) must be \(l_j\),
			and the numbers \(h_{q_{i+1},j},\ldots,q_{n,j}\) are all
			strictly less than \(l_j\).
			\item If the weights \(k_i\) and \(l_j\) are all distinct, with 
			the exception of \(k_1 = l_1 = 0\), then 
			\(S_i\cap T_j = \emptyset\) whenever \((i,j)\neq (1,1)\).
			\item One has \(k_i = l_j\) for some \((i,j)\in\{1,\ldots,m\}\times \{1,\ldots,n\}\),
			if and only if \((i,j)\in S_i\cap T_j\).
		\end{enumerate} 
	\end{lemma}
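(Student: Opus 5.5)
The plan is to use only the two marginal constraints \eqref{eq_maxj1}, \eqref{eq_maxi1} together with the monotonicity \eqref{eq:standingassumption}. Reducedness of \(h\) will not be needed. The preliminary observation is an equivalence:
\[
j\le p_i \iff l_j\ge k_i ,\qquad i\le q_j \iff k_i\ge l_j .
\]
Indeed, the set \(\{j\colon l_j\ge k_i\}\) is nonempty, since \(l_1=0\ge k_i\). Because \((l_j)\) is nonincreasing, this set is the initial segment \(\{1,\ldots,p_i\}\): if \(j\le p_i\) then \(l_j\ge l_{p_i}\ge k_i\), and if \(j>p_i\) then \(l_j<k_i\) by maximality of \(p_i\). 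The same reasoning applies to \(q_j\), using \(k_1=0\ge l_j\). So \(S_i\) is exactly the set of positions in row \(i\) whose column weight is at least \(k_i\), and \(T_j\) is exactly the set of positions in column \(j\) whose row weight is at least \(l_j\).

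For (1), first note that every entry of a plan satisfies \(h_{i,j}\le \min(k_i,l_j)\), because it is bounded by both its row maximum and its column maximum. If \(j>p_i\), then \(l_j<k_i\), hence \(h_{i,j}\le l_j<k_i\). Thus all entries \(h_{i,p_i+1},\ldots,h_{i,n}\) are strictly less than \(k_i\). Since the maximum of row \(i\) equals \(k_i\) and is attained, it must be attained at some \(j\le p_i\). The column statement is symmetric, with \(q_j\) in place of \(p_i\) (the indices \(q_i\) in the statement should be read as \(q_j\)).

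For (3), the set \(S_i\) lies in row \(i\) and \(T_j\) lies in column \(j\), so \(S_i\cap T_j\subset\{(i,j)\}\). Moreover \((i,j)\in S_i\cap T_j\) holds iff \(j\le p_i\) and \(i\le q_j\). By the equivalence above, this means \(l_j\ge k_i\) and \(k_i\ge l_j\), that is, \(k_i=l_j\).

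For (2), take \((i,j)\neq(1,1)\). Under the distinctness hypothesis, the only coincidence between a weight of \(\mu\) and a weight of \(\nu\) is \(k_1=l_1\). This is where the argument needs care. For \(i>1\) we have \(k_i\ne k_1=0=l_1\), so \(k_i\neq l_j\) for every \(j\), including \(j=1\). The symmetric argument covers \(j>1\). Hence \(k_i\ne l_j\), and (3) gives \(S_i\cap T_j=\emptyset\).

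There is no real obstacle here. The only points to handle carefully are the initial-segment equivalence for \(p_i\) and \(q_j\), and reading the statement's index typos correctly.
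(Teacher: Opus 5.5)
Your proof is correct and follows essentially the same route as the paper. Part (1) rests on bounding an entry $h_{i,j}$ with $j>p_i$ by its column maximum $l_j<k_i$, and part (3) rests on the equivalences $j\le p_i \iff l_j\ge k_i$ and $i\le q_j\iff k_i\ge l_j$. The differences are cosmetic. You make explicit the initial-segment characterization that the paper uses tacitly, you argue (1) directly rather than by contradiction, and you deduce (2) from (3) instead of repeating the same inequality argument.
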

	\begin{proof}
	(1) Fix \(i\in \{1,\ldots,m\}\). 
			The maximum among
			\(h_{i,1},\ldots,h_{i,n}\) must be \(k_i\). If
			\(h_{i,p_i+\bar m}= k_i\) for some \(\bar m> 0\),
			then the maximum among \(h_{1,p_i+\bar m},\ldots,h_{n,p_i+\bar m}\)
			is at least \(k_i\). 
			The maximum among 
			\(h_{1,p_i+m},\ldots,h_{n,p_i+\bar m}\) must be 
			\(l_{p_i+\bar m}\), which, by definition of $p_i$ is,
			strictly less than $k_i$. 
			This contradiction proves that 
			the maximum of \(h_{i,1},\ldots,h_{i,n}\), equal to $k_i$, 
			occurs among
			\(h_{i,1},\ldots,h_{i,p_i}\), and not among 
			\(h_{i,p_i+1}, \ldots, h_{n}\),
			which proves the first part
			of the assertion. The second part,
			i.e.~the claim about the numbers
			\(h_{1,j},\ldots,h_{n,j}\) is proven completely symmetrically. 
		
		(2) Suppose \((i,j)\neq (1,1)\) and 
		\((q,p)\in S_i\cap T_j\).
			Since the pair \((q,p)\) is in \(S_i\),
			its first component must be \(i\), i.e.\ \(q=i\).
			Similarly, since it is in \(T_j\), we must have
			\(p=j\).
			Thus \((q,p) = (i,j)\).
			Moreover, the definition
			of \(p_i\) and \(q_j\) now contains only strict
			inequalities because we are assuming all the weights
			distinct with the exception of \(k_1=l_1=0\).
			Having \((i,j)\in S_i\), then, implies that
			\(l_j > k_i\), while having \((i,j)\in T_j\)
			implies that \(k_i > l_j\), and we have obtained
			a contradiction.
		
		(3) Suppose \(k_i = l_j\) for some pair
		\((i,j)\in\{1,\ldots,m\}\times \{1,\ldots,n\}\).
		Since
		\(l_j \geq k_i\), we must have \((i,j)\in S_i\). Likewise,
		since \(k_i\geq l_j\), then \((i,j)\in T_j\), so that  necessity is proven.
		Now suppose \((i,j)\in S_i\cap T_j\). Since \((i,j)\in S_i\),
		\(j\leq p_i\) so \(l_j \geq k_i\), and \((i,j)\in T_j\) gives
		\(i\in T_i\), so \(k_i\geq l_j\). This completes the proof.		
	\end{proof}
	
	Given discrete
	max-plus probability measures $\mu,\nu$ and a real number \(\lambda\),
	let 
	\begin{equation}\label{eq:defofRlambda}
		R_{\lambda} :=
		\bigg(\bigcup\limits_{\{i \colon  \ k_i=\lambda\}} S_i\bigg) 
		\cup 
		\bigg(\bigcup\limits_{\{j \colon  \ l_j=\lambda\}} T_j\bigg),		
	\end{equation}
	which is a subset of \(\{1,\ldots,m\}\times \{1,\ldots,n\}\).
	We call \(R_{\lambda}\) a 
	\textit{region} or \(\lambda\)-region to emphasize
	the dependence on \(\lambda\). A region can look like
	an L written backwards (like the one in pink in Figure
	\ref{fig:regions} below),
	with the ends resting on the top and left edges of the grid,
	or a rectangle with its left side lying on
	the left edge of the grid, or a rectangle 
	with its top side on the top edge of the grid,
	or a rectangle with both its left and top sides
	lying on the left and top sides of the grid,
	respectively. 
		We remark that that our notion of region exists only when
		the measures \(\mu\) and \(\nu\) have been fixed. Also, 
		for the description of our algorithm, it is essential that the
		weights of these measures are labeled as in \ref{eq:standingassumption}.
	\begin{example}\label{example:regions}
		For $m=n=6$ and the max-plus probability measures 
		\begin{align*}
			\mu & = \max\{ 0 +\delta_{x_1},
			0+ \delta_{x_2}, -2 + \delta_{x_3},
			-3 + \delta_{x_4}, -4 + \delta_{x_5},
			-4 + \delta_{x_6}\}\\
			& = \left\{
			\begin{array}{rl}
				0, & x\in \{x_1.x_2\},\\
				-2, & x=x_3,\\
				-3, & x=x_4,\\
				-4, & x\in \{x_5.x_6\}
			\end{array}
			\right.
		\end{align*}
		and
		\begin{align*}
			\nu & = \max\{ 0 +\delta_{y_1},
			0+ \delta_{y_2}, 0 + \delta_{y_3},
			-1 + \delta_{y_4}, -2 + \delta_{y_5},
			-2 + \delta_{y_6}\}\\
			& = \left\{
			\begin{array}{rl}
				0, & y\in \{y_1.y_2.y_3\},\\
				-1, & y=y_4,\\
				-2. & y\in \{y_5.y_6\}
			\end{array}
			\right.
		\end{align*}		
		with  $x_j$, $j=1,\ldots, m$ as well as $y_i$, $i=1,\ldots, n$  all distinct,
		the regions (each in a different color)
		and a plan
		are shown in Figure \ref{fig:regions}.
		\begin{figure}[!htbp]
			\begin{center}
				\begin{tikzpicture}
					\matrix [nodes={text width=6mm, text height=3mm, align = center}]
					{
						\node(a){};		& \node {0}; & \node {0}; & \node {0};  & \node {-1};& \node {-2}; & \node {-2};\\
						\hline
						\node {0};		& \node[fill=cyan!40]{-$\infty$}; & \node[fill=cyan!40]{-$\infty$}; & \node[fill=cyan!40] {0}; & \node[fill=yellow] {-1}; & \node[fill=magenta!40]{-$\infty$}; & \node[fill=magenta!40] {-2};\\
						\node {0};		&\node[fill=cyan!40]{0}; & \node[fill=cyan!40]{0}; & \node[fill=cyan!40] {-$\infty$}; & \node[fill=yellow] {-$\infty$}; & \node[fill=magenta!40]{-2}; & \node[fill=magenta!40] {-$\infty$};\\
						\node {-2};		& \node[fill=magenta!40]{-$\infty$}; & \node[fill=magenta!40]{-$\infty$}; & \node[fill=magenta!40] {-2}; & \node[fill=magenta!40] {-$\infty$}; & \node[fill=magenta!40]{-$\infty$}; & \node[fill=magenta!40] {-$\infty$};\\
						\node {-3};	&\node[fill=green]{-$\infty$}; & \node[fill=green]{-$\infty$}; & \node[fill=green]{-$\infty$}; & \node[fill=green] {-$\infty$}; & \node[fill=green]{-3}; & \node[fill=green] {-$\infty$};\\
						\node {-4};	& \node[fill=blue!40]{-$\infty$}; & \node[fill=blue!40]{-$\infty$}; & \node[fill=blue!40]{-$\infty$}; & \node[fill=blue!40] {-4}; & \node[fill=blue!40]{-$\infty$}; & \node[fill=blue!40] {-$\infty$};\\
						\node(b) {-4};	& \node[fill=blue!40]{-4}; & \node[fill=blue!40]{-$\infty$}; & \node[fill=blue!40]{-$\infty$}; & \node[fill=blue!40] {-$\infty$}; & \node[fill=blue!40]{-$\infty$}; & \node[fill=blue!40] {-$\infty$};\\
					};
					\draw [thick]  (a.north east) -- (b.south east);
				\end{tikzpicture}
				\caption{Regions for the pair 
					\((\mu,\nu)\) of Example \ref{example:regions}.}
				\label{fig:regions}
			\end{center}
		\end{figure} 
	\end{example}
	It is convenient to extend the notions of plan and
	reduced plan as follows. 
	Fix
	discrete max-plus probability measures $\mu, \nu$,
	with their weights arranged as in \eqref{eq:standingassumption};
	suppose
	\(\lambda\) is one of these weights and consider the corresponding
	region \(R_{\lambda}\). 
		By a 
		\textit{plan} of $R_{\lambda}$
		we will mean a function \(h: R_{\lambda} \to [-\infty,0]\)
		such that the maximum of \(h\) on 
		each row and on each column of \(R_{\lambda}\)
		is $\lambda$. In Figure \ref{fig:regions}
		we see plans of each of the five regions, determined by the numbers
		in the cells.

	Let $\Pi(R_{\lambda})$ 
	be the set of plans of $R_{\lambda}$. Like above, a plan 
	$h=\{h_{i,j}\}_{(i,j)\in R_{\lambda}} \in \Pi(R_{\lambda})$ is 
	called \textit{reduced} whenever $h_{i,j}$ is a strict maximum 
	of its row or a strict maximum of its column,
	as long as $h_{i,j}> -\infty$. Thus, a reduced plan
	of a \(\lambda\)-region has no numbers other
	than \(-\infty\) and \(\lambda\). The plans of the regions
	in Figure \ref{fig:regions}
	are all reduced.
	We will denote by $\Pi_R(R_{\lambda})$  the set 
	of reduced plans of $R_{\lambda}$.
	
		Given discrete max-plus probability measures $\mu, \nu$,
		a region \(R_{\lambda}\), and
		a cost function $c$, we will use the notation \(d_c\) to
		also mean the following:
	\begin{equation*}
		d_c(R_{\lambda})
		:= 
		\min\limits_{h\in\Pi(R_{\lambda})}
		\max\limits_{(i,j)\in R_{\lambda}}
		(h_{i,j} + c_{i,j}).
	\end{equation*} 
		A plan \(h\in \Pi(R_{\lambda})\) at which the \(\min\)
		in the preceding formula is attained will be called
		a \textit{minimizing (or optimal) plan for the region} \(R_{\lambda}\).
	The following assertion holds true.
	\begin{prop}\label{prop:decompose}
		Let
		\(\mu\in \mathcal{M}(X), \nu \in \mathcal{M}(Y)\)
		be arbitrary discrete max-plus probability measures and a 
		cost function \(c:X\times Y\to [0,\infty)\) be given.
		Then
		\[
		d_c(\mu,\nu)=\max_{\lambda\in \Lambda (\mu)\cup
			\Lambda(\nu)}d_c(R_{\lambda}).
		\]
	\end{prop}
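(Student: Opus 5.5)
Two inequalities, each using the structure of reduced plans from Lemma~\ref{lemma:red} and Lemma~\ref{lemma:positions1}. First I would record that the regions \(R_\lambda\), \(\lambda\in\Lambda(\mu)\cup\Lambda(\nu)\), cover the positions that reduced plans can use. Every finite entry \(h_{i,j}\) of a reduced plan equals either \(k_i\) (strict row maximum) or \(l_j\) (strict column maximum). By Lemma~\ref{lemma:positions1}(1), the first case forces \(j\le p_i\), i.e.\ \((i,j)\in S_i\subset R_{k_i}\). The second case forces \((i,j)\in T_j\subset R_{l_j}\).

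I would also check that the regions for distinct \(\lambda\) are pairwise disjoint. Both \(S_i\) with \(k_i=\lambda\) and \(T_j\) with \(l_j=\lambda\) consist of cells \((a,b)\) with \(\min(k_a,l_b)=\lambda\), given the ordering \eqref{eq:standingassumption}. So each cell belongs to at most one region, namely \(R_{\min(k_a,l_b)}\).

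\textbf{Inequality \(\ge\).} Take an optimal reduced plan \(h\) (Lemma~\ref{lemma:red} and Theorem~\ref{thm:infismin}). Fix a weight \(\lambda\) and let \(h^\lambda\) be \(h\) restricted to \(R_\lambda\), with entries not equal to \(\lambda\) replaced by \(-\infty\). I would show that \(h^\lambda\in\Pi(R_\lambda)\).

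Consider a row \(i\) with \(k_i=\lambda\). By Lemma~\ref{lemma:positions1}(1), the value \(\lambda\) is attained in \(S_i\subset R_\lambda\). For the other rows and columns meeting \(R_\lambda\) some care is needed, since "rows of \(R_\lambda\)" should mean the rows of the defining sets \(S_i\) and columns of the defining sets \(T_j\). I expect this bookkeeping, namely pinning down exactly which row and column constraints define \(\Pi(R_\lambda)\), to be the main obstacle.

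Granting it, we get \(d_c(R_\lambda)\le\max_{R_\lambda}(h^\lambda+c)\le\max(h+c)=d_c(\mu,\nu)\).

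\textbf{Inequality \(\le\).} For each \(\lambda\), choose an optimal plan \(g^\lambda\) of \(R_\lambda\), reduced with values in \(\{\lambda,-\infty\}\). Glue these together using disjointness, setting \(h=g^\lambda\) on \(R_\lambda\) and \(-\infty\) outside all regions. Then
\[
\max(h+c)=\max_\lambda\max_{R_\lambda}(g^\lambda+c)=\max_\lambda d_c(R_\lambda).
\]
It remains to verify \(h\in\Pi(\mu,\nu)\).

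For row \(i\), the value \(k_i\) is attained in \(S_i\subset R_{k_i}\). Every other entry of row \(i\) lies in some region \(R_{\min(k_i,l_b)}\) and carries value \(\min(k_i,l_b)\le k_i\) or \(-\infty\). Hence the row maximum is exactly \(k_i\). Columns are handled symmetrically.

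This yields \(d_c(\mu,\nu)\le\max_\lambda d_c(R_\lambda)\), completing the proof.
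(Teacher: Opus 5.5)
Your proposal is correct and follows the paper's argument: the paper glues optimal region plans into a global plan $h^*$ (your ``$\le$'' direction) and proves its optimality by contradiction, restricting a competitor to $R_{\bar\lambda}$ (your ``$\ge$'' direction). You also verify points the paper leaves implicit, namely that $R_\lambda=\{(a,b)\colon \min(k_a,l_b)=\lambda\}$, so the regions are disjoint, and that the glued plan lies in $\Pi(\mu,\nu)$. The bookkeeping you ``grant'' is immediate under the reading you propose, in which only rows $i$ with $k_i=\lambda$ and columns $j$ with $l_j=\lambda$ are constrained (the reading the proposition actually requires), because the column case is just the second half of Lemma~\ref{lemma:positions1}(1).
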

	\begin{proof}
		By definition, 
		\[
		d_c(\mu,\nu) = 
		\min_{h\in \Pi(\mu,\nu)}
		\max_{(i,j)}
		(h_{i,j}+c(x_i,y_j)).
		\]
		Let us look at 
		\[
		M =
		\max_{\lambda}
		\min_{h\in\Pi(R_{\lambda})}
		\max_{(i,j)\in R_{\lambda}}
		(h_{i,j}+c(x_i,y_j)),
		\]
		which is the right hand side of the
		inequality we wish to prove. For each one of the distinct 
		\(\lambda\)'s, we pick \(h^{\lambda}\in R_{\lambda}\)
		for which \(\max_{(i,j)\in R_{\lambda}}(h_{i,j}+c(x_i,y_j))\)
		takes the least possible value, i.e.~we pick an optimal
		plan \(h^{\lambda}\)
		of the region \(R_{\lambda}\) for each \(\lambda\).
		Further, let \(\bar\lambda\) be the value of \(\lambda\) 
		at which \(M\) is attained. 
			Let \(h^{\ast}\) be the element of \(\Pi(\mu,\nu)\)
			such that its restriction to \(R_{\lambda}\) is
			\(h^{\lambda}\), for each \(\lambda\in \Lambda(\mu)\cup
			\Lambda(\nu)\).
		We claim that \(h^{\ast}\) is 
		optimal for \(d_c(\mu,\nu)\). In fact, if it is not,
		then  there is another \(h^0\in \Pi(\mu,\nu)\) such that
		\[
		\max_{(i,j)}
		(h^0_{i,j}+c(x_i,y_j))
		\leq \max_{(i,j)}
		(h_{i,j}+c(x_i,y_j)) \quad \forall h\in \Pi(\mu,\nu).
		\] 
		In particular, if \(h=h^{\ast}\), then, by the assumption just
		made, the inequality must be strict, and
		\begin{align*}
			\max_{(i,j)\in R_{h^{\ast}}}
			(h^0_{i,j}+c(x_i,y_j))
			\leq \max_{(i,j)}
			(h_{i,j}^0+c(x_i,y_j))
			< \max_{(i,j)} (h^{\ast}_{i,j}+c(x_i,y_j)).
		\end{align*}
		But 
		the maximum value of the function
		\(\lambda \mapsto \max_{(i,j)\in R_{\lambda}}
		(h^{\ast}_{i,j}+c(x_i,y_j))\)  
		is \(M\) and is attained at \(\lambda = h^{\ast}\).
		Thus, it follows
		that
		\begin{align*}
			\max_{(i,j)\in R_{\lambda^{\star}}}
			(h^0_{i,j}+c(x_i,y_j))
			< 
			\max_{(i,j)\in R_{\lambda^{\star}}}
			(\lambda^{\star}_{i,j}+c(x_i,y_j)) = 
			\max_{(i,j)\in R_{\lambda^{\star}}}
			(h^{\bar{\lambda}}_{i,j}+c(x_i,y_j)),
		\end{align*}
		which contradicts the definition of \(h^{\bar\lambda}\).
		Therefore, \(\lambda^{\star}\) is optimal for \(d_c(\mu,\nu)\),
		so \(d_c(\mu,\nu)=M\).
	\end{proof}
	
	\subsection{Finding the optimal cost on a region}
	\label{subsection:findingoptimalcost}
	By Proposition~\ref{prop:decompose}, to solve the original
	problem, it is enough to find the optimal
	plan for each $\lambda$-region $R_\lambda$,
	hence also finding the respective
	optimal costs $d_c(R_{\lambda})$; the optimal
	plan for the original problem will then coincide 
	over each $R_\lambda$ with the optimal plan for
	this region.
	
		To find the optimal
		plan for the given region $R_\lambda$, suppose the 
		cost function $c$ be given;
		let us number the values that $c$ takes over $R_\lambda$ in
		an increasing order. Namely, suppose that \(s\in \mathbb{Z}^+\) be
	the number of distinct values that \(c\) takes on over the region
	\(R_{\lambda}\) and 
	denote these values, in increasing order, by
	\begin{equation}\label{eq:c1cs}
		\beta_1 < \cdots < \beta_s.
	\end{equation}
	For each $m \in \{1,2, \ldots, s\}$ 
	we define the function
	$h_{c}^m\colon R_\lambda\to \{-\infty,\lambda\} $
	by the formula
	\[	
	h_{c}^m(i,j):=
	\left\{
	\begin{array}{rl}
		\lambda, &  \mbox{if $c(x_i,y_j)\leq \beta_m$},\\
		-\infty, &  \mbox{otherwise}.
	\end{array}
	\right.
	\] 
	That is,
		\(h_c^m\) 
		is a plan for the region \(R_{\lambda}\) 
		such that  \(\lambda\) appears in the cells
		that host one of the smallest \(m\) values of \(c\) on the region, while 
		\(-\infty\) appears in all the other cells.
	In particular,
	for $m=s$, \(h_c^s\) fills 
	all the cells in the region \(R_{\lambda}\) with \(\lambda\), 
	and hence is a plan for \(R_{\lambda}\), that is,
	\(h_c^s \in \Pi(R_{\lambda})\).
	This motivates the following definition.
	
	\begin{defn}\label{defn:mclambda}
		Given \(\lambda\), a \(\lambda\)-region \(R_{\lambda}\),
		and a cost function 
		$c$, let \(m\) be the smallest integer for
		which the function \(h_c^m\) on 
		the region \(R_{\lambda}\) constitutes a plan for \(R_{\lambda}\),
		i.~e.
		\begin{align*}
			m_c(\lambda)=\min\{m \colon  \ h_c^m \in \Pi(R_{\lambda})\}.
		\end{align*}
	\end{defn}
	It is convenient to assign
	to each \((i,j)\in R_\lambda\) the number (from \(1\) to \(s\))
	that the value \(c(x_i,y_j)\) occupies in the list
	\eqref{eq:c1cs}. Such an assignment is given by the function
	$f: R_{\lambda} \to \{1,2, \ldots, s\}$ 
	determined by the condition:
	{\small
		\begin{equation}\label{eq_def_fnum1}
			f(i_1,j_1) < f(i_2,j_2) 
			\quad \textrm{if and only if} \quad c(x_{i_1},y_{j_1}) 
			< c(x_{i_2},y_{j_2}) \quad \mbox{for $(i_1,j_1),(i_2,j_2)
				\in R_{\lambda}$.}
		\end{equation}
	}
	We
	illustrate the above definitions with the following example.
	\begin{example}\label{example:c1cs}
		Suppose the region is \(\{1,2,3\}^2\) and the cost 
		function (restricted to this region) is, in matrix form,
		\[
		[c(x_i,y_j)]_{i,j=1}^3 = 
		\begin{pmatrix}
			2 & 4 & 8
			\\
			8 & 2 & 0
			\\
			2 & 0 & 5
		\end{pmatrix}.
		\]
		Then \(f(2,3) = f(3,2) =  1\),
		\(f(1,1)=f(2,2)=f(3,1)=2\),
		\(f(1,2)=3\),  \(f(1,3)=f(2,1)=4\), and
		\begin{align*}
			&\ h_c^1 = 
			\begin{pmatrix}
				-\infty & -\infty & -\infty
				\\
				-\infty & -\infty & \lambda
				\\
				-\infty & \lambda & -\infty
			\end{pmatrix}, \quad
			h_c^2 = 
			\begin{pmatrix}
				\lambda & -\infty & -\infty
				\\
				-\infty & \lambda & \lambda
				\\
				\lambda & \lambda & -\infty
			\end{pmatrix},
			\\
			&\ h_c^3 = 
			\begin{pmatrix}
				\lambda & -\infty & -\infty
				\\
				-\infty & \lambda & \lambda
				\\
				\lambda & \lambda & \lambda
			\end{pmatrix}, \quad
			h_c^4 = 
			\begin{pmatrix}
				\lambda & \lambda & \lambda
				\\
				\lambda & \lambda & \lambda
				\\
				\lambda & \lambda & \lambda
			\end{pmatrix}.
		\end{align*}
		Here \(m_c(\lambda) = 2\),
		\(h_c^{m_c(\lambda)}= h_c^2\).
	\end{example}

	\begin{lemma}\label{lemma:supportsub}
		Let \(R_{\lambda}\) be a \(\lambda\)-region, \(c\)
		be a cost function. Let \(h\in \Pi(R_{\lambda})\)
		be a minimizer for \(d_c(R_{\lambda})\). Then the support 
		of \(h\) is included in the support of \(h_c^{m_c(\lambda)}\) and,
		with the notation of \eqref{eq:c1cs}, 
		\[
		d_c(R_{\lambda}) = \lambda + \beta_{m_c(\lambda)}.
		\] 
		Moreover, \(h_c^{m_c(\lambda)}\) is itself a minimizing plan.
	\end{lemma}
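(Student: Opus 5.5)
We will prove the two inequalities $d_c(R_\lambda)\le \lambda+\beta_{m_c(\lambda)}$ and $d_c(R_\lambda)\ge \lambda+\beta_{m_c(\lambda)}$. Along the way we will get the support inclusion for every minimizer and the optimality of $h_c^{m_c(\lambda)}$. Write $m^\ast:=m_c(\lambda)$.

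\emph{Upper bound.} By Definition~\ref{defn:mclambda}, $h_c^{m^\ast}\in\Pi(R_\lambda)$. On its support it equals $\lambda$ and $c\le\beta_{m^\ast}$ there. Off its support it equals $-\infty$. Hence
\[
\max_{(i,j)\in R_\lambda}\bigl(h_c^{m^\ast}(i,j)+c_{i,j}\bigr)=\lambda+\beta_{m^\ast},
\]
with equality because some cell with $c=\beta_{m^\ast}$ lies in the support: it has $f=m^\ast\le s$, and the region is nonempty. This gives $d_c(R_\lambda)\le\lambda+\beta_{m^\ast}$.

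\emph{Lower bound and support inclusion.} Let $h\in\Pi(R_\lambda)$ be a minimizer, so
\[
\max_{(i,j)\in R_\lambda}(h_{i,j}+c_{i,j})\le \lambda+\beta_{m^\ast}.
\]
The key observation is that, since $h$ is a plan of the region, every row and every column of $R_\lambda$ contains a cell where $h=\lambda$. Let $E:=\{(i,j)\in R_\lambda\colon h_{i,j}=\lambda\}$ and let $\beta_r:=\max_{E}c$. Then
\[
\max(h+c)\ge \lambda+\beta_r .
\]
Moreover, $h_c^{r}$ is $\lambda$ on all of $E$ (since $c\le\beta_r$ there), so $h_c^r$ attains $\lambda$ in every row and column. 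Its values lie in $\{-\infty,\lambda\}$, so its row and column maxima are exactly $\lambda$, i.e.\ $h_c^r\in\Pi(R_\lambda)$. By minimality of $m^\ast$, $r\ge m^\ast$, hence $d_c(R_\lambda)\ge\lambda+\beta_r\ge\lambda+\beta_{m^\ast}$. Combined with the upper bound this gives $d_c(R_\lambda)=\lambda+\beta_{m^\ast}$, and $h_c^{m^\ast}$ attains it, so it is a minimizing plan.

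For the support inclusion, take $(i,j)\in\supp h$ with $(i,j)\notin\supp h_c^{m^\ast}$. Then $c_{i,j}\ge\beta_{m^\ast+1}>\beta_{m^\ast}$, so optimality forces $h_{i,j}\le \lambda+\beta_{m^\ast}-c_{i,j}<\lambda$. We need to exclude such finite entries below $\lambda$, and this is the main obstacle: a plan of the region is only required to have row and column maxima equal to $\lambda$, so entries strictly less than $\lambda$ are permitted by the definition. We see two ways to handle this.

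\begin{enumerate}
\item Apply Lemma~\ref{lemma:red} (its region version): reduce $h$ to a reduced minimizer with smaller support, whose entries lie in $\{-\infty,\lambda\}$. Then the inequality $c_{i,j}\le\beta_{m^\ast}$ holds on its support, which gives the inclusion for reduced minimizers.
\item Interpret ``support'' as the set where $h=\lambda$, i.e.\ the set $E$. The argument above gives $\max_E c=\beta_r$ with $\beta_r\le d_c(R_\lambda)-\lambda=\beta_{m^\ast}$, hence $E\subset\supp h_c^{m^\ast}$.
\end{enumerate}

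We expect the intended statement to be one of these readings, with entries of $h$ below $\lambda$ either excluded by reducedness or harmless.
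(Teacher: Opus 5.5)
Your proof is correct and follows the paper's route. Both take the largest cost $\beta_r$ over the cells where $h=\lambda$, observe that $h_c^{r}$ is then a plan so $r\ge m_c(\lambda)$, and pair this with the upper bound given by $h_c^{m_c(\lambda)}$. By working with $E=\{h=\lambda\}$ instead of $\supp h$ you are in fact more careful than the paper, whose first display $d_c(R_\lambda)=\max_k\{c(x_{i_k},y_{j_k})+\lambda\}$ tacitly assumes $h\equiv\lambda$ on its support.

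Your worry about the support inclusion is justified, because the literal claim is false. Take $n=2$, all weights zero, $\lambda=0$, $c_{11}=c_{22}=0$, $c_{12}=c_{21}=5$: the plan $h_{11}=h_{22}=0$, $h_{12}=h_{21}=-5$ is a minimizer with full support, while $h_c^{m_c(0)}=h_c^{1}$ is supported on the diagonal. So the correct statement is your reading (2). For minimizers taking only the values $-\infty$ and $\lambda$ it is exactly the stated inclusion; that is the only case the paper's argument handles, and the only case used in Proposition~\ref{prop:uniquereduced}.
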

	
	\begin{proof}
		Let \(\{(x_{i_1},y_{j_1}),\ldots,(x_{i_p},y_{j_p})\}\) be the support 
		of \(h\). Then 
		\[
		d_c(R_{\lambda}) = \max_{1\leq k\leq p}
		\{c(x_{i_k},y_{j_k})+\lambda\}.
		\]
		With the notation of \eqref{eq:c1cs}, 
		let \(\beta_m\) be the largest of the \(c(x_{i_k},y_{j_k})\); 
		then \(d_c(R_{\lambda}) = \lambda + \beta_m\). But then the function
		\(h_c^m\), by definition, must place a \(\lambda\) 
		in every cell \((i,j)\) such that 
		\(c(x_i,y_j) \in  \{\beta_1,\ldots,\beta_m\}\). Thus, the support of 
		\(h\) is included in the support of \(h_c^m\), and $h_c^m$
		is a plan, so 
		\(m_c(\lambda)\leq m\) and 
		\[
		d_c(R_{\lambda}) = \lambda + \beta_{m_c(\lambda)} \leq \lambda + \beta_m = d_c(R_{\lambda}).  
		\]
		On the other hand, since \(h_c^{m_c(\lambda)}\) is a plan, we must have
		\[
		d_c(R_{\lambda}) \leq \lambda + \beta_{m_c(\lambda)}.  
		\]
		Combining the last two inequalities, we obtain that
		\(d_c(R_{\lambda}) = \lambda + \beta_{m_c(\lambda)}\), as desired, 
		and \(m=m_c(\lambda)\), so the support of \(h\) is included 
		in the support of \(h_c^{m_c(\lambda)}\). This 
		means \(h_c^{m_c(\lambda)}\) is itself a minimizing plan,
		and the last assertion
		follows.
	\end{proof}
	We collect the preceding conclusions in the following:
	\begin{thm}
		Let \(\mu\in\mathcal{M}(X)\), \(\nu\in\mathcal{M}(Y)\), that is,
		discrete max-plus probability measures on \(X\) and \(Y\)
		respectively, namely: \(\mu=\max_{i=1}^{m}(k_i +  \delta_{x_i})\),
		\(\nu=\max_{j=1}^{n}(k_j+ \delta_{y_j})\), and let
		\(c:X\times Y\to [0,\infty)\) be a given cost function.
		To obtain an optimal tropical plan $h$ between \(\mu\)
		and \(\nu\), one
		considers for every
		\(\lambda\in \Lambda(\mu)\cup \Lambda(\nu)\) (i.~e.~for
		each distinct weight of either \(\mu\) and \(\nu\))
		the respective region $R_\lambda$ and a minimizing
		plan $h_\lambda$
		for each $R_\lambda$ (e.g.\ $h_\lambda :=h_c^{m_c(\lambda)}$),
		setting then $h\in \Pi(\mu,\nu)$ to be the plan whose restriction
		over each $R_\lambda$ coincides with \(h_{\lambda}\). Furthermore, 
		\begin{align*}
			d_c(\mu,\nu)
			=
			\max_{\lambda\in \Lambda(\mu)\cup \Lambda(\nu)}(\lambda 
			+ c(x_{i_{\lambda}},y_{j_{\lambda}})),
		\end{align*}
		where each
		$(i_{\lambda},j_{\lambda}) \in f^{-1}(m_c(\lambda))$,
		\(f\) standing for the numbering function
		defined by condition~\eqref{eq_def_fnum1}. 
		In particular, if all the weights \(k_i\) and \(l_j\) are distinct,
		except \(k_1 = l_1 = 0\), then
		\begin{align*} 
			d_c(\mu,\nu) = 
			\max\limits_{1\leq i\leq m} 
			\min_{j\leq p_i} (k_i + c(x_i,y_j))  
			\vee \max \limits_{1\leq j\leq n}  
			\min_{i\leq q_j} (l_j + c(x_i,y_j)).
		\end{align*}
	\end{thm}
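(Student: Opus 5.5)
The theorem mostly collects Proposition~\ref{prop:decompose} and Lemma~\ref{lemma:supportsub}, so I will argue in three steps.

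\emph{Step 1: the glued plan is admissible and optimal.} Every cell $(i,j)$ lies in some region: for instance $(i,1)\in S_i$, and more generally every cell of row $i$ beyond column $p_i$ lies in some $T_j$ with $l_j>k_i$. I will allow cells lying in no region, setting $h=-\infty$ there. Where regions overlap (cells with $k_i=l_j$, by Lemma~\ref{lemma:positions1}(3)), the overlapping regions are the same $R_\lambda$ with $\lambda=k_i=l_j$, so the restrictions are consistent.

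I then check that the glued function $h$ lies in $\Pi(\mu,\nu)$. In row $i$, the part inside $R_{k_i}$ is $S_i$, where $h_{k_i}$ has row maximum $k_i$. The remaining cells lie in regions $R_{\lambda'}$ with $\lambda'\neq k_i$. By Lemma~\ref{lemma:positions1}(1) such cells are either in columns $j>p_i$ with $l_j<k_i$, or in $T_j$-pieces with $\lambda'=l_j$. In the second case $(i,j)\in T_j$ forces $k_i\ge l_j$, so $\lambda'\le k_i$. Hence every entry of row $i$ outside $S_i$ is $\le k_i$, and the row maximum is exactly $k_i$. Columns are handled symmetrically.

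The cost of $h$ is $\max_\lambda d_c(R_\lambda)$, which by Proposition~\ref{prop:decompose} equals $d_c(\mu,\nu)$. So $h$ is optimal, and in particular $h_\lambda=h_c^{m_c(\lambda)}$ is a valid choice by Lemma~\ref{lemma:supportsub}.

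\emph{Step 2: the general formula.} Lemma~\ref{lemma:supportsub} gives $d_c(R_\lambda)=\lambda+\beta_{m_c(\lambda)}$. For any $(i_\lambda,j_\lambda)\in f^{-1}(m_c(\lambda))$ we have $c(x_{i_\lambda},y_{j_\lambda})=\beta_{m_c(\lambda)}$ by the definition \eqref{eq_def_fnum1} of $f$. Taking the maximum over $\lambda$ and invoking Proposition~\ref{prop:decompose} yields the first displayed formula.

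\emph{Step 3: the distinct-weights case (the main work).} Suppose all weights are distinct except $k_1=l_1=0$. For $\lambda=k_i$ with $i\ge2$, the region $R_\lambda$ is exactly the single row segment $S_i$. A plan of a single row needs only the row maximum to equal $\lambda$; each column of a one-row region consists of one cell, and I interpret its column condition as vacuous on cells set to $-\infty$, consistent with how reduced plans are drawn in the figure. So the optimal plan puts $\lambda$ at a cheapest cell, and $d_c(R_{k_i})=k_i+\min_{j\le p_i}c(x_i,y_j)$. Symmetrically, $d_c(R_{l_j})=l_j+\min_{i\le q_j}c(x_i,y_j)$.

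The delicate case is $\lambda=0$. There $R_0=S_1\cup T_1$ is the union of the first row segment and the first column segment, meeting at $(1,1)$. A plan needs a $0$ in row $1$ within $S_1$ and a $0$ in column $1$ within $T_1$. Choosing the cheapest cell of each independently gives the optimum, and its cost is the maximum of the $i=1$ and $j=1$ terms in the formula.

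One point needs checking: that the plan-of-region notion requires the maximum only on rows and columns that must carry mass, not on every column of $S_i$. This is exactly what the general formula's use of $h_c^{m_c(\lambda)}$ requires to reduce to a single minimum, so I expect resolving this interpretive point to be the main obstacle. Once it is settled, combining the three steps gives the final expression.
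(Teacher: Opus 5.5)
Your proposal is correct and follows the paper's route (Proposition~\ref{prop:decompose} combined with Lemma~\ref{lemma:supportsub}); it writes out the gluing check and the distinct-weights computation that the paper dismisses as a direct consequence. The interpretive point you flag is not an open obstacle: a plan of $R_\lambda$ must constrain only rows with $k_i=\lambda$ and columns with $l_j=\lambda$, which is the only reading under which Proposition~\ref{prop:decompose} (already used in your Steps 1--2) holds and is the one used in Figure~\ref{fig:regions}; two small slips are that the cells of row $i$ beyond column $p_i$ lie in $T_j$ with $l_j<k_i$ (not $l_j>k_i$), the regions in fact partitioning the grid, and that with distinct weights $p_1=q_1=1$, so $R_0=\{(1,1)\}$ and your ``delicate'' case $\lambda=0$ is trivial.
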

	
	\begin{proof}
		It is a direct consequence of combining 
		Lemma~\ref{lemma:supportsub} with 
		Proposition \ref{prop:decompose}.
	\end{proof}
	
	\subsection{Remarks on uniqueness of plans on a region}
	As we see fom Example~\ref{example:c1cs},,
	the function \(h_c^{m_c(\lambda)}\) (i.e.~the first function
	on \(R_{\lambda}\),
	as we go from 
	\(m=1\) to \(m=s\), that happens to be a plan) is 
	\emph{not} necessarily a \emph{reduced} plan.
	Another, simpler, example of such a situation is
	\[
	[c(x_i,y_j)]_{i,j=1}^2 = 
	\begin{pmatrix}
		1 & 3
		\\
		3 & 3 
	\end{pmatrix};
	\]
	indeed, supposing \(\{(1,1),(1,2),(2,1),(2,2)\}\)
	is a region \(R_{\lambda}\),
	then, here, \(m_c(\lambda) = 2\), and \(h^{m_c(\lambda)}\) is the
	\(2\times 2\) matrix with \(\lambda\) in every entry.
	
	We can state the following about reduced minimized plans and uniqueness of minimizing plans of a region.
	
	\begin{prop}\label{prop:uniquereduced}
		Let \(R_{\lambda}\) be a \(\lambda\)-region (corresponding to 
		some discrete max-plus probability measures \(\mu\) and \(\nu\)),  
		\(c:X\times Y\to [0,\infty)\) be a cost function.
		If \(h^{m_c(\lambda)}_c\) is a reduced plan, then 
		it is the \emph{unique} reduced minimizing plan for \(d_c(R_{\lambda})\).
		Vice versa, if a minimizing plan for  \(d_c(R_{\lambda})\) contains only
		\(-\infty\) and \(\lambda\) and is unique among minimizing plans
		with this property, then 
		it is reduced and must coincide with \(h^{m_c(\lambda)}_c\).
	\end{prop}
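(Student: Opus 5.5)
The whole argument rests on Lemma~\ref{lemma:supportsub}: every minimizer $h$ for $d_c(R_{\lambda})$ has $\supp h\subset \supp h_c^{m_c(\lambda)}$, and $h_c^{m_c(\lambda)}$ is itself a minimizing plan. Recall also that a reduced plan of a region takes only the values $-\infty$ and $\lambda$. Such a plan is therefore determined by its support.

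\emph{First assertion.} Suppose $h_c^{m_c(\lambda)}$ is reduced, and let $h$ be any reduced minimizing plan. Then $h$ is $\{-\infty,\lambda\}$-valued, and by Lemma~\ref{lemma:supportsub} its support satisfies $\supp h\subset \supp h_c^{m_c(\lambda)}$. It suffices to show this inclusion is an equality.

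Take $(i,j)\in\supp h_c^{m_c(\lambda)}$. Since $h_c^{m_c(\lambda)}$ is reduced, the entry $\lambda$ at $(i,j)$ is a strict maximum in its row or in its column within $R_{\lambda}$. Say it is the row. Then $(i,j)$ is the only cell of that row of $R_{\lambda}$ in $\supp h_c^{m_c(\lambda)}$. But $h$ is a plan, so its row $i$ attains $\lambda$ somewhere, necessarily inside $\supp h\subset\supp h_c^{m_c(\lambda)}$. That cell can only be $(i,j)$, so $(i,j)\in\supp h$. The column case is symmetric. Hence the supports coincide and $h=h_c^{m_c(\lambda)}$.

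\emph{Converse.} Let $h$ be the unique minimizing plan among $\{-\infty,\lambda\}$-valued minimizers. Since $h_c^{m_c(\lambda)}$ is such a minimizer, uniqueness gives $h=h_c^{m_c(\lambda)}$ at once.

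It remains to show $h$ is reduced. Suppose not: some $(i,j)\in\supp h$ is neither the unique $\lambda$ in its row nor in its column. Define $h'$ by setting the $(i,j)$ entry to $-\infty$. Row $i$ and column $j$ each still contain another $\lambda$, so $h'$ is still a plan, and it is still $\{-\infty,\lambda\}$-valued. Its cost $\max(h'+c)$ is at most that of $h$, so $h'$ is also minimizing. But $h'\neq h$, contradicting uniqueness. Hence $h$ is reduced.

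The only point that needs care is the first assertion, namely that strictness in a row or column of $h_c^{m_c(\lambda)}$ forces that cell into the support of every minimizer. The support inclusion from Lemma~\ref{lemma:supportsub} is exactly what makes this step work.
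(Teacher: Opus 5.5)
Your proposal is correct and follows the paper's proof. The first assertion uses the same support inclusion from Lemma~\ref{lemma:supportsub} together with the row/column strictness of $h_c^{m_c(\lambda)}$; you argue directly where the paper argues by contradiction. In the converse, your entry-deletion argument for reducedness is the region-level version of what the paper gets by citing Lemma~\ref{lemma:red}.

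You apply the support inclusion only to $\{-\infty,\lambda\}$-valued minimizers. That is the only setting in which the proof of Lemma~\ref{lemma:supportsub} actually works.
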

	
	\begin{proof}
		To prove the first assertion,
		suppose that \(h^{m_c(\lambda)}_c\) is a reduced plan for
		\(d_c(R_{\lambda})\).
		It is minimizing by Lemma~\ref{lemma:supportsub}.
		If there is another reduced minimizing plan \(h\) for 
		\(d_c(R_{\lambda})\), then by Lemma~\ref{lemma:supportsub} its 
		support is a subset of 
		the support of \(h_c^{m_c(\lambda)}\). Hence if 
		\(h\neq h_c^{m_c(\lambda)}\), then, for some \((x_i,y_j)\) 
		one has \(h(x_i,y_j) = -\infty\) and 
		\(h_c^{m_c(\lambda)}(x_i,y_j) =
		\lambda\). But, \(h^{m_c(\lambda)}_c\) being a reduced plan
		(by assumption), either the \(i\)-th row of the matrix 
		\([h_c^{m_c(\lambda)}(x_k,y_l)]_{k,l}\), or its \(j\)-th column,
		contain only \(-\infty\), except at \((i,j)\) where \(\lambda\) is.
		Therefore, the matrix \([h(x_k,y_l)]_{k,l}\)
		has either all the $i$-th column or all the $j$-th row full of
		$-\infty$, contradicting the fact that $h$ is a plan
		for $R_\lambda$, hence proving the assertion. 
		
		To prove the second assertion, let \(h\) be the unique minimizing plan for  \(d_c(R_{\lambda})\) 
		among minimizing plans containing only
		\(-\infty\) and \(\lambda\). 
		It has to be reduced by Lemma~\ref{lemma:red}. On the other hand,
		also \(h^{m_c(\lambda)}_c\) contains only
		\(-\infty\) and \(\lambda\) and is a minimizing plan
		for \(d_c(R_{\lambda})\), by Lemma~\ref{lemma:supportsub}.
		Thus \(h=h^{m_c(\lambda)}_c\) as claimed.
	\end{proof}
	
	We remark that the latter Proposition~\ref{prop:uniquereduced} asserts that
	having a unique plan (among all plans containing only 
	\(-\infty\) and \(\lambda\)) is equivalent to 
	\(h_c^{m_c(\lambda)}\) being reduced, but this is
	\emph{not} equivalent to the existence of a unique
	reduced minimizing plan as the following example shows. 
	\begin{example}\label{ex:distinctnotionsuniqueness}
		Suppose \(\lambda = 0\).
		\begin{enumerate}
			\item If the cost function is
			\begin{align*}
				[c(x_i,y_j)]_{i,j=1}^2 = 
				\begin{pmatrix}
					1 & 2 
					\\
					4 & 3 
				\end{pmatrix},
			\end{align*}
			then \(h_c^{m_c(\lambda)}\) is not reduced;
			there are two minimizing plans
			(containing only 
			\(0\) and \(-\infty\)), with one of them the only
			reduced minimizing plan: 
			\begin{align*}
				h_c^{m_c(\lambda)}=
				\begin{pmatrix}
					0 & 0
					\\
					-\infty & 0
				\end{pmatrix},
				\quad 
				h_1 = \begin{pmatrix}
					0 & -\infty
					\\
					-\infty & 0
				\end{pmatrix}.
			\end{align*}
			\item If the cost function is
			\begin{align*}
				[c(x_i,y_j)]_{i,j=1}^3 = 
				\begin{pmatrix}
					1 & 4 & 2 
					\\
					6 & 7 & 8
					\\
					5 & 9 & 3 
				\end{pmatrix},
			\end{align*}
			then \(h_c^{m_c(\lambda)}\) is not reduced,
			and there are at least two reduced minimizing plans:
			{\footnotesize 
				\begin{align*}
					h_c^{m_c(\lambda)}=
					\begin{pmatrix}
						0 & 0 & 0
						\\
						0 & -\infty & -\infty
						\\
						0 & -\infty & 0
					\end{pmatrix},
					\quad 
					h_1 = \begin{pmatrix}
						-\infty & 0 & 0
						\\
						0 & -\infty & -\infty 
						\\
						0 & -\infty & -\infty 
					\end{pmatrix},
					\quad 
					h_2 = \begin{pmatrix}
						-\infty & 0 & -\infty
						\\
						0 & -\infty & -\infty 
						\\
						-\infty & -\infty & 0 
					\end{pmatrix}.
				\end{align*}
			}
		\end{enumerate}
	\end{example}
	
	\subsection{A remark on perfect matchings}\label{subsection:remarkonpm}
	
	Of particular interest, as in the classical mass transportation problem, are minimizing  plans
	supported on subsets of the type
	\(\{(x_1,y_{\sigma(1)})\),
	\ldots,
	\((x_n,y_{\sigma(n)})\}\),
	where $\sigma\colon \{1.\ldots, n\} \to \{1.\ldots, n\}$.  
	We will call them
	\textit{perfect matching} plans.
	The plan $h_1$ in Example~\ref{ex:distinctnotionsuniqueness}(1) and
	the plan $h_3$ in Example~\ref{ex:distinctnotionsuniqueness}(2)
	are perfect matchings, while the other plans in these examples are not. 	
	The example below shows that for some data one might have
	no perfect matching minimizing plans. 
	\begin{example}\label{example:ultrametricnotenough} 
		Consider the cost matrix
		\[
		[c(x_i,y_j)]_{i,j=1}^3 = 
		\begin{pmatrix}
			5 & 1 & 5 
			\\
			5  & 2 & 5
			\\
			3 & 5 & 4
		\end{pmatrix}.
		\]
		If
		\(k_3 = k_2 = k_1 = l_3 = l_2 = l_1 = 0\),
		then 
		\[
		h 
		= 
		\begin{pmatrix}
			-\infty & 0 & -\infty 
			\\
			-\infty & 0 & -\infty
			\\
			0 & -\infty  &  0 
		\end{pmatrix}
		\]
		is  the unique minimizing plan
		(among plans containing only \(0\) and
		\(-\infty\)), but 
		is not a perfect matching. 
	\end{example}
	
	We stress that the nonexistence
	of the optimal tropical plans even when the max-plus probability measures
	$\mu$ and $\nu$ have all the weights equal to zero
	(as we said earlier, we call this case \textit{fundamental})
	is in striking contrast with the classical optimal mass transportation.
	The latter always admits an optimal transport plan corresponding
	to a perfect matching (i.~e.~a permutation matrix)
	between discrete measures which are sums of Dirac masses
	with equal weights, by virtue of the Birkhoff-von Neumann
	theorem which states that the set of extreme points of the
	Birkhoff polytope of bistochastic
	matrices in $\R^{n^2}$ is
	exactly the set of permutation matrices
	(and hence a linear functional on this polytope
	always attains its minimum on a permutation matrix). 
	
	The following assertion holds true.
	\begin{prop}\label{prop:whennopm}
		Let \(\mu = \max_{j=1}^{n} 
		(k_j + \delta_{x_j})\), 
		\(\nu = \max_{j=1}^{n} (l_j + \delta_{y_j})\),
		with the elements arranged as in \eqref{eq:standingassumption}
		as usual.
		If there is \(j\in\{1,\ldots,n\}\) such that \(k_j\neq l_j\),
		then there can be
		no plan that would correspond to a perfect matching.
	\end{prop}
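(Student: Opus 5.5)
Suppose, for contradiction, that some plan \(h\in\Pi(\mu,\nu)\) is a perfect matching, i.e.\ \(\supp h\subset\{(i,\sigma(i))\colon i=1,\ldots,n\}\) for a map \(\sigma\). The plan is to show that \(\sigma\) must be a bijection under which matched weights coincide, and then that two decreasingly sorted sequences related by such a bijection are identical.

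First, every row and every column of \(h\) must have a finite maximum, since all weights are finite (this reduction is explained before Lemma~\ref{lemma:red}). So every row and every column contains at least one support point. A support contained in the graph of \(\sigma\) meets every column only if \(\sigma\) is surjective. Hence \(\sigma\) is a permutation, and row \(i\) and column \(\sigma(i)\) each contain exactly one finite entry, namely \(h_{i,\sigma(i)}\).

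Next, the row constraint~\eqref{eq_maxj1} gives \(h_{i,\sigma(i)}=k_i\), and the column constraint~\eqref{eq_maxi1} gives \(h_{i,\sigma(i)}=l_{\sigma(i)}\). Therefore \(k_i=l_{\sigma(i)}\) for every \(i\). This says that the weights \(k_1,\ldots,k_n\) and \(l_1,\ldots,l_n\) coincide as multisets, since each value occurs the same number of times in both lists.

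Finally, both sequences are sorted in decreasing order by \eqref{eq:standingassumption}. The sorted rearrangement of a multiset is unique, so \(k_j=l_j\) for every \(j\). This contradicts the hypothesis that \(k_j\neq l_j\) for some \(j\).

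No step is really an obstacle. The only point needing care is the surjectivity of \(\sigma\) (equivalently, that no row or column is entirely \(-\infty\)), and this follows from the finiteness of the weights. If one prefers not to invoke the finiteness reduction, the same argument still works with \(-\infty\) weights, because the multiset comparison is unaffected.
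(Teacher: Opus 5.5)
Your argument is correct, but it is organized differently from the paper's. The paper first observes that a perfect matching plan is automatically reduced. It then partitions the grid into the regions $R_{\lambda_k}$ and counts. If $m_{k,1}$ rows and $m_{k,2}$ columns carry the weight $\lambda_k$, then by Lemma~\ref{lemma:positions1}(1) any plan has at least $\max\{m_{k,1},m_{k,2}\}$ finite entries in $R_{\lambda_k}$. Hence it has at least $\sum_k\max\{m_{k,1},m_{k,2}\}\ge n$ finite entries in total. A perfect matching has exactly $n$ finite entries, which forces $m_{k,1}=m_{k,2}$ for all $k$, and the sorting then gives $k_j=l_j$.

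You bypass regions, reduced plans and Lemma~\ref{lemma:positions1} entirely: you read off $k_i=h_{i,\sigma(i)}=l_{\sigma(i)}$ from the support, which exhibits the weight-preserving bijection directly. Your route is shorter and self-contained. The paper's count gives, as a by-product, the lower bound $\sum_k\max\{m_{k,1},m_{k,2}\}$ on the support size of every plan.

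One caution about your closing remark on $-\infty$ weights: your surjectivity step genuinely fails there. For $k=(0,0)$ and $l=(0,-\infty)$, the plan with support $\{(1,1),(2,1)\}$ sits on the graph of the constant map $\sigma\equiv 1$. So in that setting you must take bijectivity of $\sigma$ as part of the definition of a perfect matching rather than derive it. Once you do, the identity $k_i=l_{\sigma(i)}$ and the multiset comparison go through unchanged.
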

	\begin{proof} 
		If \(h\in \Pi(\mu,\nu)\) is not reduced, then it 
		does not correspond to a perfect matching, so assume that
		\(h\in \Pi_R(\mu,\nu)\).
		Recall the 
		definition~\ref{eq:defofRlambda}
		and consider the disjoint regions
		\(R_{\lambda_k}\), $k=1,\ldots, r$
		determined by the plan \(h\), where
		$\lambda_k$, \(k=1,\ldots,r\)  
		are all the distinct weights 
		of the max-plus probability measures  \(\mu\) and  \(\nu\).
		Suppose that  the set
		\(
		\{i \colon   k_i=\lambda_k\} 
		\)
		has \(m_{k,1}\) elements, and the set 
		\(
		\{j \colon  \ l_j=\lambda_k\} 
		\)
		has \(m_{k,2}\) elements; at least one of these 
		two numbers must be positive. Observe that the plan \(h\)
		must have at least 
		\(\max\{m_{k,1},m_{k,2}\}\) finite (i.e.\ different from \(-\infty\))
		entries on the region \(R_{\lambda_k}\). Thus,
		the plan \(h\) has at least
		\[
		m = \max\{m_{1,1},m_{1,2}\} + \cdots + \max\{m_{r,1}, m_{r,2}\}  
		\]
		finite entries in total. 
		Keep in mind that
		\[
		\sum_{k=1}^r m_{k,1}  = \sum_{k=1}^r m_{k,2} =n.
		\]
		The plan will correspond to a perfect matching 
		only if there are \(n\) finite entries in total.
		The only way to have \(m=n\) is if
		\(m_{k,1}=m_{k,2}\) for every \(k=1,\ldots,r\).
		Given that the weights are arranged as in
		\eqref{eq:standingassumption} as usual, the conclusion follows.
	\end{proof}
	
	\section{Uniqueness of solution and perfect matchings for random costs}
	In this section, we will try to elucidate some questions
	regarding the optimal cost, perfect matchings and uniqueness when
	we introduce some randomness in the cost function.
	We will limit ourselves to the fundamental case
	(i.e.\ when all the weights of the discrete max-plus probability
	measures are zero) and with \(m=n\),
	i.~e.:
	\begin{align*}
		\muzerosn & = \max\{ 0 +\delta_{x_1},\ldots 
		0 + \delta_{x_m}\},\\
		\nuzerosn & = \max\{ 0 +\delta_{y_1},\ldots,
		0 + \delta_{y_n}\}.
	\end{align*}
	with  $x_j$, $j=1,\ldots, n$ as well as $y_i$, $i=1,\ldots, m$ all distinct.
	In what follows the sequences of max-plus probability measures $\muzerosn$
	and $\nuzerosn$ as above are fixed, while the cost function is random,
	i.~e.~is represented by a Bernoulli random matrix, i.~e.~ 
	each entry in the \(n\times n\) cost matrix is independent from the
	others and takes the value
	\(\beta_1\) with probability \(p\) and \(\beta_2\) with
	probability \(q=1-p\), where \(\beta_1< \beta_2\). 
	\subsection{Optimal tropical cost for random cost matrices}\label{subsection:tropicalcost}
	
	The following statement holds true.
	
	\begin{thm}\label{thm:bernoulli1}
			Let \(\beta_1,\beta_2\) be nonnegative numbers, with
			\(\beta_1<\beta_2\),
		and suppose that for each \(n\), 
		\(\muzerosn\) and \(\nuzerosn\)
		are discrete max-plus probability measures with all their weights
		equal to zero, and \(c^n\) is a Bernoulli cost matrix:
		\(\mathbb{P}(c^n(x_i,y_j)=\beta_1)=p\), 
		\(\mathbb{P}(c^n(x_i,y_j)=\beta_2)=q=1-p\) for
		\(i,j\in\{1,\ldots,n\}\), where
		\(x_1,\ldots,x_n\) and \(y_1,\ldots,y_n\) are the points
		of the support of \(\muzerosn\) and \(\nuzerosn\). If \(q<1\), then 
		\[
		\mathbb{P}(d_{c^n}(\muzerosn,\nuzerosn) = \beta_1) \to 1
		\quad \textrm{ as } \quad n\to \infty.
		\]
	\end{thm}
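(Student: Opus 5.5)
In the fundamental case all weights are zero, so $\Lambda(\muzerosn)\cup\Lambda(\nuzerosn)=\{0\}$ and the only region is $R_0=\{1,\ldots,n\}^2$. By Proposition~\ref{prop:decompose} and Lemma~\ref{lemma:supportsub}, $d_{c^n}(\muzerosn,\nuzerosn)=d_{c^n}(R_0)=\beta_{m_c(0)}$. The cost matrix takes only the values $\beta_1<\beta_2$, so $d_{c^n}=\beta_1$ holds exactly when $h_c^1$ is a plan of $R_0$. Here $h_c^1$ is the matrix with $0$ at the cells where $c^n=\beta_1$ and $-\infty$ elsewhere. It is a plan if and only if every row and every column of $c^n$ contains at least one entry equal to $\beta_1$.

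The event $\{d_{c^n}(\muzerosn,\nuzerosn)=\beta_1\}$ therefore coincides with the event $E_n$ that no row and no column of $c^n$ consists entirely of $\beta_2$. (If all entries equal $\beta_1$, the cost is still $\beta_1$, which is consistent.) We estimate the complement by a union bound over the $2n$ rows and columns. A fixed row or column is entirely $\beta_2$ with probability $q^n$, by independence of the entries. Hence
\[
\mathbb{P}(E_n^c)\le 2n\,q^n .
\]

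Since $q<1$, we have $2nq^n\to 0$ as $n\to\infty$: exponential decay beats linear growth, which can be seen for instance via the ratio test or $\log(2n)+n\log q\to-\infty$. This gives $\mathbb{P}(E_n)\to1$, which is the claim. The case $q=0$ is trivial.

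The only step needing care is the reduction: we must confirm that the region formalism gives exactly $d_c=\beta_{m_c(0)}$ with the $\beta$'s indexed over the values actually attained on $R_0$. If $c^n$ happens to take only the value $\beta_2$, then the list \eqref{eq:c1cs} is just $\beta_2$ and the cost equals $\beta_2$. This case lies inside $E_n^c$, so it is covered by the union bound. After this reduction the probabilistic estimate is routine.
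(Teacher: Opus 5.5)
Your proof is correct. The reduction is the same as the paper's: in the fundamental case the only region is the whole $n\times n$ grid, and $d_{c^n}=\beta_1$ exactly when every row and every column of $c^n$ contains an entry $\beta_1$. You were right to flag the clash between the Bernoulli values and the list \eqref{eq:c1cs} of values actually attained; the paper passes over it silently.

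Where you differ is the probabilistic estimate. The paper applies full inclusion--exclusion to the $2n$ events ``row $i$ (resp.\ column $j$) contains no $\beta_1$''. It uses that $a$ rows and $b$ columns cover $n^2-(n-a)(n-b)$ cells to get the exact value \eqref{eq:formulac1}, $\mathbb{P}(d_{c^n}=\beta_1)=q^{n^2}\sum_{j=0}^n(-1)^j\binom{n}{j}(1-q^{-j})^n$. It then simply asserts that this alternating sum tends to $1$. You keep only the first Bonferroni term, $\mathbb{P}(d_{c^n}\neq\beta_1)\le 2nq^n$.

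Your route is shorter and in fact more complete for the stated limit. It gives convergence with an explicit exponential rate, whereas the passage from the alternating sum to the limit is not immediate as written and is not carried out in the paper. The paper itself remarks that a very short argument is available, and yours is one.

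What the exact formula buys is the full law of the optimal cost. It is reused in Remark~\ref{remark:sinprueba} and in \eqref{eq:probofc1}. It also captures regimes where the probability does \emph{not} tend to $1$, such as $p=n^{-\gamma}$ with $\gamma\ge1$, which a union bound cannot detect. Your bound does extend to $p=p_n$ with $np_n-\log n\to\infty$, via $(1-p_n)^n\le e^{-np_n}$, which covers $p_n=n^{-1/2}$. For the opposite regime, independence of the rows gives the complementary bound $\mathbb{P}(d_{c^n}=\beta_1)\le(1-q^n)^n$.
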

	\begin{proof}
		Even though a very short argument can be provided,
		we will derive a formula for the probability under question.
		Referring to
		Lemma \ref{lemma:supportsub} (and recall
		definition \ref{defn:mclambda})
		the optimal tropical cost \(d_{c^n}\)
		between
		\(\muzerosn
		=\max_{i=1}^n (0 + \delta_{x_i})\)
		and
		\(\nuzerosn =\max_{j=1}^n (0 + \delta_{y_j})\)
		will be \(\beta_1\) or \(\beta_2\) depending on whether
		\(m_{c^n}(0)\) is \(1\) or \(2\) respectively.
		It is \(1\) if and only if
		in the matrix for \(c^n\) there
		is at least one
		\(\beta_1\) in every row \emph{and} in every column.
		Denote by \(F_i\) the event that there is at
		least one \(\beta_1\) in the \(i\)-th row
		of the matrix, and by
		\(C_j\) the event that there is at least one \(\beta_1\)
		in the \(j\)-th column of the matrix.
		In the calculation that follows we 
		retain, for the sake of clarity, the notation \(m\) for the number
		of rows and
		\(n\) for the number of columns in the cost matrix, although one
		really has $m=n$. Therefore for the indices $i$ and $j$ one has 
		\(i\in\{1,\ldots,m\}\),
		\(j\in\{1,\ldots,n\}\).
		Thus
		\begin{align*}
			\mathbb{P}(d_{c^n}(\muzerosn,\nuzerosn) = \beta_1) 
			=  \mathbb{P}((\cap_{i=1}^m F_i) \cap (\cap_{j=1} C_j))
			= 1 - \mathbb{P}( (\cup_{i=1}^m F_i^c) \cup (\cup_{j=1} C_j^c)),
		\end{align*}
		where the upper index \(c\) denotes the complement
		of the event. We have
		\begin{align*}  
			\mathbb{P}( & (\cup_{i=1}^m F_i^c) \cup (\cup_{j=1}^n C_j^c)) 
			= 
			\\
			= & \ \sum_{s=1}^{m+n} (-1)^{s+1} 
			\sum_{\substack{a+b=s \\ (a,b)\neq (0,0)}}\binom{m}{a}\binom{n}{b}
			\mathbb{P}(F_1^c\cap \cdots \cap F_a^c \cap C_1^c\cap \cdots \cap C_b^c)
			\\
			= & \  \sum_{s=1}^{m+n} (-1)^{s+1} 
			\sum_{a+b=s}\binom{m}{a}\binom{n}{b}
			q^{mn-(m-a)(m-b)} 
			\\
			= & \ 
			- q^{mn} 
			\sum_{\substack{0\leq a \leq m \\ 0\leq b \leq n \\ (a,b)\neq (0,0)}} 
			(-1)^{a+b} \binom{m}{a}\binom{n}{b}
			q^{-(m-a)(m-b)}.
		\end{align*}
		Assuming that \(p<1\) (otherwise 
		\(\mathbb{P}(d_c(\muzeros,\nuzeros) = \beta_1) =1\) for
		any \(n\) so that there is nothing to prove).
		Then
		\begin{align*}
			\mathbb{P}( & (\cup_{i=1}^m F_i^c) \cup (\cup_{j=1}^n C_j^c))
			\\ 
			= & \ 
			-q^{mn}\bigg( 
			\sum_{\substack{0\leq a \leq m \\ 0\leq b \leq n }}
			(-1)^{a+b} \binom{m}{a}\binom{n}{b}
			q^{-(m-a)(m-b)} 
			-q^{-mn}
			\bigg)
			\\
			= & \ -q^{mn}(-1)^n \sum_{a=0}^m \binom{m}{a}(-1)^a
			\sum_{b=0}^n (-1)^{n-b}\binom{n}{b}(q^{-(m-a)})^{n-b} + 1 
			\\
			= & \ -q^{mn}(-1)^{m+n}\sum_{a=0}^b
			\binom{m}{a}(-1)^{m-a}(1-q^{-(m-a)})^n + 1.
		\end{align*}
		Recalling that \(m=n\), we get
		\begin{equation}\label{eq:formulac1}
			\mathbb{P}(d_{c^n}(\muzerosn,\nuzerosn) = \beta_1) 
			= q^{n^2}\sum_{j=0}^n (-1)^j \binom{n}{j}(1-q^{-j})^n.
		\end{equation}
		Thus,
		\[
		\mathbb{P}(d^n_c(\muzerosn,\nuzerosn) = \beta_1) \to 1
		\quad \textrm{ as } \quad n\to \infty,
		\]
		if \(q<1\), proving the claim.
	\end{proof}
	For the following remark, let us introduce a special notation for the expression
	in the right hand side of~\eqref{eq:formulac1}, namely, set
	\[
	\mathfrak{s}(n;p)
	:= 
	\begin{cases}
		(1-p)^{n^2}\sum_{j=0}^n (-1)^j 
		\binom{n}{j}(1-(1-p)^{-j})^n.
		&  \textrm{ if } p\in[0,1),
		\\
		1,  & \textrm{ if } p = 1.
	\end{cases}
	\]
	\begin{remark}\label{remark:sinprueba}
		The relationship~\eqref{eq:formulac1} reads
		\[
		\lim_{n}\mathfrak{s}(n;p) = 1 , 
		\quad 0 < p \leq 1.
		\]
		It is also easy to show that
		\[
		\lim\limits_{p\to 0} \mathfrak{s}(n;p) = 0,
		\quad 
		\lim\limits_{p\to 1} \mathfrak{s}(n;p) = 1,  
		\qquad n \in \mathbb{N},
		\]
		so that \(p\mapsto \mathfrak{s}(n;p)\) is continuous
		over $[0,1]$. 
		The asymptotics of $\mathfrak{s}$,
		hence that of the probability of the optimal tropical cost
		equaling the
		minimum value of the cost function,
		may be interesting also for the more general cases when
		$p$ is not constant but depends on $n$. 
		For instance, one has
		$\lim_{n\to\infty}\mathfrak{s}(n, 1/n^{\gamma}) = 0$ for all
		$\gamma \geq 1$ and 
		$\lim_{n\to\infty}\mathfrak{s}\big(n,1/n^{1/2}
		\big) = 1$.
	\end{remark}
	\begin{remark}\label{rem-binomial}
		A quite similar situation occurs not only when the cost is given not
		by a Bernoulli random matrix, but, say, by a binomial
		one.
		Namely, suppose now that \(s\in \N\) is fixed,
		and each entry in the cost matrix \(c^n\) can take  one
		of the values \(\beta_1<\cdots<\beta_s\) (as in
		\eqref{eq:c1cs}), with \(\beta_1\) appearing
		with probability \(p_1\). Let \(q:=1-p_1\).
		Then the lower bound for
		\(\mathbb{P}(d_{c^n}(\muzerosn,\nuzerosn) = \beta_1)\) can be
		obtained in the same way as in the proof of the
		Theorem~\ref{thm:bernoulli1}. Therefore
		\begin{align*}
			\lim_{n\to\infty}\mathbb{P}(d_c(\muzerosn,\nuzerosn) = \beta_1)
			= 1.
		\end{align*}
		Thus, even if the available choices for the entries
		of the cost matrix for \(c^n\) is a large but fixed number, 
		the optimal tropical cost between \(\muzerosn\)
		and \(\nuzerosn\) is equal to the the smallest value \(\beta_1\) of the cost
		with large probability for large $n$ (with probability of this event
		tending to one as  \(n\to\infty\)). 
		Moreover, if 
		\(p_j\) is the probability of \(\beta_j\) appearing
		in any given entry of the cost matrix, then it follows from
		the calculation above that
		\begin{equation}\label{eq:probofc1}
			\mathbb{P}\left(d_{c^n}(\muzerosn,\nuzerosn)=\beta_j\right)
			=
			\mathfrak{s}\left(n,\sum_{p=1}^j p_k\right) 
			- \mathfrak{s}\left(n,\sum_{p=1}^{j-1} p_k\right),
		\end{equation}
		which tends to zero as $n\to\infty$, the above equality~\eqref{eq:probofc1}
		giving the rate of convergence.
	\end{remark}
	
	\subsection{Presence of perfect matching optimal plans}
	We consider the following definition.
	\begin{defn}\label{defn:containsapm}
		Let \(\mu\) and \(\nu\) be discrete max-plus probability measures and 
		let \(h\) be a plan for a \emph{square} region \(R_{\lambda}\).
		We will say that \(h\) \emph{contains a perfect matching}, 
		if there is a
		perfect matching plan $\tilde h$ for the same region with support
		contained in the support of $h$.
	\end{defn}
	
	In other words, $h$ is a perfect matching plan for a region \(R_{\lambda}\)
	if it can be ``simplified'' by substituting some of its \(\lambda\)
	entries 
	by $-\infty$ to get a perfect matching plan  for a \(R_{\lambda}\).
	We will again discuss the case of a random cost provided by a Bernoulli
	cost matrix, and restrict 
	ourselves to the fundamental case. To simplify the discussion,
	let \(\beta_1 = 0\) and
	\(\beta_2 = 1\). If there is a zero in every row and
	every column of the matrix, then, as we know, the
	optimal tropical cost is \(0\), but if we look at the
	corresponding plan (represented by the matrix \(h\)),
	it may be impossible to ``simplify'' it
	(change some of the entries equal to $0$ to \(-\infty\))
	so as to produce a perfect matching plan (see 
	Example \ref{example:ultrametricnotenough}), that is,
	it does not contain a perfect matching.
	In the opposite
	direction, if the corresponding optimal plan contains a
	perfect matching, then
	the optimal tropical cost is \(0\). 
	Summing up, there are the following possibilities.
	\begin{itemize}
		\item The 
			optimal tropical cost
		is \(1\). 
		This occurs exactly when some row or column
		of the cost matrix
		fails to have a \(0\). Then there is always a perfect
		matching plan. 
		In fact, the absence of a \(0\) in some row or column
		of the cost matrix means that \(h_c^{m_c(0)}\) is the matrix
		with \(1\) in all
		the entries, which contains any perfect matching plan.
		For instance, if the cost matrix is
		\begin{align*}
			[c_{i,j}]_{i,j=1}^2 & = 
			\begin{pmatrix}
				0 & 1 
				\\
				1 & 1
			\end{pmatrix},
		\end{align*}
		then  a possible perfect matching minimizing plan is 
		\begin{align*}
			[h_{i,j}]_{i,j=1}^2 &=
			\begin{pmatrix}
				0 & -\infty
				\\
				-\infty & 0
			\end{pmatrix}.
		\end{align*}
		\item The 
			optimal tropical cost
		is \(0\), but the optimal plan
		does not contain a perfect matching.
		\item The 
			optimal tropical cost is \(0\), and the optimal
		plan contains a perfect matching.
	\end{itemize}
	For the following theorem we give here a random graph argument
	based on the strong and remarkable result of Bollob\'as
	and Thomason (see \cite[theorem~7.11]{bollobasrandomgraphs})
	that will also be
	used in the proof of Theorem \ref{thm:rarity1} below.
	\begin{thm}\label{th:pmtendstoone}
		Let \(\beta_1<\beta_2\), and suppose that for each natural
		number \(n\), 
		\(\muzerosn\) and \(\nuzerosn\)
		are discrete max-plus probability measures with all their weights
		equal to zero, and \(c^n\) is a Bernoulli cost matrix:
		\(\mathbb{P}(c^n(x_i,y_j)=\beta_1)=p_n\), 
		\(\mathbb{P}(c^n(x_i,y_j)=\beta_2)=q_n=1-p_n\) for
		\(i,j\in\{1,\ldots,n\}\), where
		\(x_1,\ldots,x_n\) and \(y_1,\ldots,y_n\) are the points
		of the support of \(\muzerosn\) and \(\nuzerosn\). 
		If \(p_n\geq (\log n)/n\) for all but finitely many \(n\), then
		\[
		\lim\limits_{n\to\infty} 
		\mathbb{P}(\exists h\in \Pi^{c^n}(\muzerosn,\nuzerosn) : 
		h \textrm{ contains a 
			perfect matching}) = 1.  	
		\] 			
	\end{thm}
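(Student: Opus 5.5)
We reduce the statement to the existence of a perfect matching in a random bipartite graph and then invoke the Bollob\'as--Thomason threshold result cited above. In the fundamental case every weight equals $0$, so there is a single region $R_0=\{1,\ldots,n\}^2$. By Proposition~\ref{prop:decompose} and Lemma~\ref{lemma:supportsub}, $h_{c^n}^{m_{c^n}(0)}$ is an optimal tropical plan, so it suffices to show that this particular plan contains a perfect matching with probability tending to one.

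\emph{Case 1: some row or column of $c^n$ has no entry $\beta_1$.} Then $m_{c^n}(0)=2$ and $h_{c^n}^{2}\equiv 0$ on the whole grid. This plan contains every permutation support, so it contains a perfect matching.

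\emph{Case 2: every row and every column of $c^n$ contains $\beta_1$.} Then $m_{c^n}(0)=1$, and the support of $h_{c^n}^{1}$ is exactly the set of cells where $c^n=\beta_1$. This set is the edge set of a random bipartite graph $G_n$ on $\{x_1,\ldots,x_n\}\cup\{y_1,\ldots,y_n\}$, with each edge present independently with probability $p_n$. The plan $h^1_{c^n}$ contains a perfect matching precisely when $G_n$ has a perfect matching $\sigma$. Indeed, restricting $h^1_{c^n}$ to $\{(i,\sigma(i))\}$ places the value $0$ once in each row and each column, so it is a perfect matching plan with the same optimal cost $\beta_1$.

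Combining the two cases, the event in the statement contains the event
\[
E_n:=\{G_n \text{ has a perfect matching}\}\ \cup\ \{\text{some row or column of } c^n \text{ lacks } \beta_1\}.
\]
It therefore suffices to show $\mathbb{P}(E_n)\to 1$.

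\emph{Main step: the threshold result.} The key input is the Bollob\'as--Thomason theorem~\cite[theorem~7.11]{bollobasrandomgraphs} for random bipartite graphs $G(n,n;p)$. With $p_n=(\log n+\omega_n)/n$, it says that asymptotically almost surely $G_n$ has a perfect matching exactly when it has minimum degree at least one. It implies that $\mathbb{P}(\{\delta(G_n)\ge1\}\setminus\{G_n\text{ has a perfect matching}\})\to0$ for every sequence $p_n$. Since $\{\delta(G_n)\ge 1\}$ is exactly Case 2, and Case 1 is its complement, we get $\mathbb{P}(E_n^c)\le \mathbb{P}(\delta(G_n)\ge1,\ \text{no perfect matching})\to0$.

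The main obstacle is that the hypothesis $p_n\ge(\log n)/n$ is not a fixed-$\omega_n$ regime: $p_n$ may sit exactly at the threshold, where $\mathbb{P}(\delta(G_n)\ge1)$ tends to a constant strictly between $0$ and $1$. For this reason I use the hitting-time (or ``for all $p$'') form of the theorem rather than a statement conditioned on $\omega_n\to\infty$. If only the fixed-$\omega$ version is available, I would instead argue along subsequences. Passing to a subsequence, either $np_n-\log n\to\infty$, in which case $G_n$ a.a.s.\ has a perfect matching, or $np_n-\log n$ stays bounded. In the bounded case I would apply the form of~\cite[theorem~7.11]{bollobasrandomgraphs} stating that the limiting probabilities of minimum degree $\ge1$ and of having a perfect matching coincide, namely $e^{-2e^{-c}}$. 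Since $\{\text{perfect matching}\}\subseteq\{\delta\ge1\}$, the difference of the two probabilities tends to $0$, and with the Case 1 argument this gives $\mathbb{P}(E_n)\to1$. Monotonicity of these events in $p$ can be used to handle $p_n$ beyond the window.
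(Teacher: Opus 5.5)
Your proof is correct. It shares the paper's skeleton: identify the $\beta_1$-cells of $c^n$ with the edges of $G(n,n;p_n)$, use that $h_{c^n}^{m_{c^n}(0)}$ is always optimal, and invoke Bollob\'as--Thomason. It departs from the paper exactly where the paper's argument is too quick.

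The paper makes two claims. First, that $h_{c^n}^{m_{c^n}(0)}$ contains a perfect matching if and only if $G_n$ does. Second, that $G_n$ has a perfect matching with probability tending to one.
\begin{itemize}
\item The ``only if'' half of the first claim fails in your Case 1. An isolated vertex forces $h_{c^n}^{m_{c^n}(0)}\equiv 0$, which contains every permutation, while $G_n$ has no perfect matching.
\item The second claim fails under the stated hypothesis. For $p_n=(\log n)/n$ the number of isolated vertices is asymptotically Poisson with mean $2$, so $\mathbb{P}(G_n\text{ has a perfect matching})\to e^{-2}$.
\end{itemize}
Your split on $\delta(G_n)\ge 1$, together with the hitting-time form of the theorem, is exactly what is needed at the threshold. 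Your route therefore gives a proof that is valid in the whole stated range, which the paper's does not.

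To make the ``for every sequence $p_n$'' deduction explicit, realize $G(n,n;p)$ as $G_M$ with $M\sim\mathrm{Bin}(n^2,p)$ independent of the random bipartite graph process. Then
\[
\{\delta(G_M)\ge1,\ G_M\text{ has no perfect matching}\}\subseteq\{\tau_{\delta\ge1}<\tau_{\mathrm{PM}}\}.
\]
The right-hand event does not involve $M$ at all, and its probability tends to $0$ by the hitting-time theorem.

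In particular, the hitting-time route never uses $p_n\ge(\log n)/n$: the conclusion holds for every sequence $p_n$. Your subsequence fallback uses the hypothesis only for its dichotomy.

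One trivial slip: if $c^n\equiv\beta_2$, the list of distinct cost values on the region has a single element, so $m_{c^n}(0)=1$, not $2$. The optimal plan is still identically $0$, so nothing changes.
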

	\begin{proof}
		We associate \(c^n\) with one and only one
		random bipartite (undirected) graph, denoted by \(G_n(c^n)\),
		with the sets \(\{x_1,\ldots,x_n\}\)
		and \(\{y_1,\ldots,y_n\}\) as the two disjoint sets 
		of vertices  
		in the following way: 
		\(c^n(x_i,y_j) = \beta_1\) if \(x_iy_j\) is an edge,
		and \(c^n(x_i,y_j) = \beta_2\) otherwise.
		The plan \(h_{c^n}^{m_{c^n}(0)}\) 
			(recall the definitions of
			section \ref{subsection:findingoptimalcost})
		contains a perfect matching
		plan if and only if the bipartite graph
		\(G_n(c^n)\) contains a perfect matching.
		In the proof of 
		\cite[theorem~7.11]{bollobasrandomgraphs}, it is shown that 
		the probability that the random bipartite graph contains
		a perfect matching approaches 1 as \(n\to\infty\).
		Thus, the probability that
		\(h_{c^n}^{m_{c^n}(0)}\) contains a perfect matching
		also approaches \(1\) as \(n\to\infty\). Since
		\(h_{c^n}^{m_{c^n}(0)}\) is always an optimal plan,
		the result follows.
	\end{proof}
	
	\begin{remark}
		An alternative proof of Theorem~\ref{th:pmtendstoone} can be
		offered
		as follows.
		Regardless of whether the optimal tropical cost is \(\beta_1\) or \(\beta_2\), 
		for the plan \(h_c^{m_c(0)}\) (which is always minimizing),
		the property of containing a perfect matching plan 
		is characterized by the fact that,
		for some permutation \(\sigma\in S_n\),
		the product  
		\[
		\Pi_{j=1}^n |\beta_2-c(x_j,y_{\sigma(j)})|  
		\] 
		is different from zero (necessarily then it is equal to $(\beta_2-\beta_1)^n$).
		The latter is guaranteed, for instance, when
		the matrix \([\beta_2-c(x_i,y_j)]_{i,j=1}^n\) is not
		singular (i.e.\ has nonzero determinant). By a theorem of
		Basak and Rudelson \cite{BASAK2017426},
		this probability approaches 1, for every
		\(0<p<1\).	
	\end{remark}
	
	\subsection{Uniqueness of minimizing plans}
	We show now that in the fundamental case
	(when all the weights of the discrete max-plus masure are zero),
	when the uniform probability
	is put on the space of the cost matrices, 
	the uniqueness of a
	minimizing plan 
	containing only \(0\) and \(-\infty\) 
	is an asymptotically rare event 
	in the sense that its
	probability tends to zero as the number of weights approaches infinity. 
	Namely, the following result is valid.
	
	\begin{thm}\label{thm:rarity1}
		Fix any positive real number \(M>0\) and 
		let \(\{X_n\}_{n=1}^{\infty}\) and
		\(\{Y_n\}_{n=1}^{\infty}\) be sequences
		of subsets of \(X\) and \(Y\) respectively,
		with \(\# X_n=\# Y_n=n\) for all \(n\). 
		For each \(n\in\N\), let
		\[
		\muzerosn := \max\{0+\delta_{x_1},\ldots,0+\delta_{x_n}\},
		\quad
		\nuzerosn := \max\{0+\delta_{y_1},\ldots,0+\delta_{y_n}\},\]
		where \(x_1,\ldots,x_n\) are the elements of \(X_n\)
		and \(y_1,\ldots,y_n\) those of \(Y_n\).
		For each \(n\)
		let \(P_n\) be the uniform 
		probability measure over
		\([0,M]^{X_n\times Y_n}\).
		Define \(C_n\subset [0,M]^{X_n\times Y_n} \)
		as the set of functions \(c\) such that there is 
		a unique, among plans containing only \(0\)
		and \(-\infty\), minimizing plan for 
		\(d_c(\mu_n,\nu_n)\).
		Then
		\[
		\lim\limits_{n\to\infty} P_n(C_n) = 0.
		\]
	\end{thm}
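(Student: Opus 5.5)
Since all weights are zero, there is only one region: $R_0=\{1,\ldots,n\}^2$. So $d_c(\muzerosn,\nuzerosn)=d_c(R_0)$ by Proposition~\ref{prop:decompose}, and the plans containing only $0$ and $-\infty$ are exactly the $\{0,-\infty\}$-valued plans of $R_0$. The plan is to show that $c\in C_n$ forces a purely combinatorial event about a random bipartite graph, and then to show that this event has vanishing probability.

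\emph{Reduction to a graph event.} Under $P_n$ the $n^2$ entries of $c$ are a.s.\ pairwise distinct. Hence the numbering $f$ of \eqref{eq_def_fnum1} is a uniformly random bijection onto $\{1,\ldots,n^2\}$, and the supports of $h_c^1,h_c^2,\ldots$ form the random bipartite graph process $G_0\subset G_1\subset\cdots$ on the vertex classes $\{x_i\}$, $\{y_j\}$. At each step one new uniformly chosen edge is added. The graph $G_{m_c(0)}$ is the process stopped at the hitting time $\tau_n$ of the property ``minimum degree $\ge 1$''. By the second assertion of Proposition~\ref{prop:uniquereduced}, $c\in C_n$ implies that $h_c^{m_c(0)}$ is reduced. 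For a $\{0,-\infty\}$-valued plan, being reduced means that each edge of $G_{\tau_n}$ has an endpoint of degree one. Such a graph has no path with three edges, so it is a vertex-disjoint union of stars, hence a forest on $2n$ vertices. Therefore
\[
P_n(C_n)\le \mathbb{P}\big(G_{\tau_n}\text{ is a forest}\big)\le \mathbb{P}(\tau_n\le 2n-1).
\]
The last inequality holds because a forest on $2n$ vertices has at most $2n-1$ edges.

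\emph{Probabilistic step.} It remains to show $\mathbb{P}(\tau_n\le 2n)\to0$. Since $\tau_n$ is a hitting time of a monotone property, $\{\tau_n\le 2n\}$ is the event that the uniform random bipartite graph $G_{2n}$ with exactly $2n$ edges has no isolated vertex. I would control the number $Z$ of isolated vertices among the $x_i$ by a second-moment argument. The probability that a fixed $x_i$ is isolated is
\[
\binom{n^2-n}{2n}\Big/\binom{n^2}{2n}=(1+o(1))e^{-2},
\]
so $\mathbb{E}Z\sim n e^{-2}\to\infty$. For $i\neq i'$ the analogous computation with $2n$ removed cells gives $\mathbb{P}(x_i,x_{i'}\text{ isolated})=(1+o(1))e^{-4}$, so $\operatorname{Var}Z=o((\mathbb{E}Z)^2)$. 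By Chebyshev, $\mathbb{P}(Z=0)\to0$. As an alternative, I could pass to the binomial model $G(n,n,p)$ with $p=3/n$ via monotonicity, or quote the Bollob\'as--Thomason hitting-time result \cite[theorem~7.11]{bollobasrandomgraphs}. That result gives $\tau_n=(1+o(1))n\log n$ a.a.s., which is far larger than $2n$.

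\emph{Main obstacle.} The only delicate point is the reduction. I must ensure that uniqueness in the sense of $C_n$ really forces $h_c^{m_c(0)}$ to be reduced, which is the second half of Proposition~\ref{prop:uniquereduced}. I also need the careful translation ``reduced $\Rightarrow$ star forest $\Rightarrow$ at most $2n-1$ edges''. Null sets with tied costs are harmless. The probabilistic estimate itself is routine.
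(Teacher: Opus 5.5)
Your proof is correct, but it differs from the paper's at the key step. Both arguments identify $m_c(0)$ with the hitting time $\tau$ of minimum degree one in the random bipartite graph process. Both then use the second half of Proposition~\ref{prop:uniquereduced} to reduce the claim to showing that $h_c^{m_c(0)}$ is a.a.s.\ not reduced.

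The paper's route is as follows. It invokes the Bollob\'as--Thomason theorem that $G_\tau$ a.a.s.\ contains a perfect matching. It discards the event that $G_\tau$ is \emph{exactly} a perfect matching, asserting that this event has vanishing probability. It then notes that any edge outside a perfect matching has another support cell in its row and in its column, so the plan cannot be reduced.

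You instead characterize reduced $\{0,-\infty\}$-valued plans intrinsically. Every edge has an endpoint of degree one, so the support is a star forest with at most $2n-1$ edges. Non-reducedness then follows from the crude bound $\tau_n\ge 2n$, which your second-moment count of isolated vertices in $G_{2n}$ proves. In fact $\binom{n^2-2n}{2n}/\binom{n^2}{2n}\le\big(\binom{n^2-n}{2n}/\binom{n^2}{2n}\big)^2$ for $n\ge4$, so $\operatorname{Var}Z\le\mathbb{E}Z$ and $P_n(C_n)=O(1/n)$.

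What each route buys:
\begin{itemize}
\item Yours is self-contained and avoids the deep hitting-time perfect-matching theorem. It also supplies an explicit rate. Finally, it replaces the paper's informal claim that the ``exactly a perfect matching'' event is negligible with an actual estimate.
\item The paper's route gives extra structural information: the canonical minimizer a.a.s.\ contains a perfect matching plus additional edges. This ties the result to Theorem~\ref{th:pmtendstoone}.
\end{itemize}
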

	
	\begin{proof}
		In order to apply the theory from
		\cite{bollobasrandomgraphs}, let us introduce 
		the notion of \emph{bipartite graph process},
		specifically, on the set of 
		vertices \(X_n \cup Y_n\). 
		Any given bijective function $f\colon \{1,\ldots, n^2\}\to \{1,\ldots, n\}^2$
		we define 
		determines a sequence of \(n^2+1\) graphs in the following way:
		at time step
		\(t=0\) there are no edges and at step \(t\in\{1,\ldots,n^2\}\)
		the edge 
		\((i,j):=f(t)\) is added.
		At the 
		\(n^2\)-th time step we obtain the complete bipartite graph.
		Note that the set of bijective functions
		$f\colon \{1,\ldots, n^2\}\to \{1,\ldots, n\}^2$
		is in one-to-one correspondence with the set of
		permutations of 
		$\{1,\ldots, n^2\}$, i.~e.~with the symmetric group 		
		\(S_{n^2}\) of order $n^2$; in fact, each \(f^{-1}\) is an
		enumeration of the cells of an $n\times n$ matrix.  
		If the function $f$ (or equivalently the respective
		permutation \(\sigma\in  S_{n^2}\)) is chosen randomly,
		with uniform
		probability, then we have a \emph{random bipartite
			graph process}, which coincides with the one described
		in~\cite{bollobasrandomgraphs} (see pp.~42 and~171 therein).
		Let \[
		\Omega_n := \{\omega: X_n\times Y_n \to [0,M] \ : 
		\ \omega \textrm{ takes } n^2 \textrm{ distinct values }\},\]
		and
		for each $\omega\in \Omega_n$ define
		the mapping
		$f_\omega\colon \{1,\ldots, n^2\}\to \{1,\ldots, n\}^2$
		by setting \(f_\omega(t):= (i,j)\),
		where  \((i,j)\)  is the unique pair of indices
		such that \(\omega(x_i,y_j)\) is the \(t\)-th largest
		value among the \(n^2\) distinct values
		\(\omega(x_1,y_1),\ldots,\omega(x_n,y_n)\).
		Thus, each \(\omega\in\Omega_n\) determines
		an ordering of the matrix cells \(f_{\omega}\)
		which, in turn,
		gives the above described graph process with \(f:=f_\omega\).
		
		Since \(P_n\) is the uniform measure on
		\([0,M]^{X_n\times Y_n}\), we have \(P_n(\Omega_n)=1\).
		Moreover, since \(P_n\) is uniform,
		for each bijective
		\(g \colon \{1,\ldots, n^2\} \to \{1,\ldots, n\}^2\),
		the set
		\(\{\omega\in \Omega_n  \colon f_\omega=g\}\)
		has the same \(P_n\)-measure, namely, \(1/(n^2)!\).
		Hence, these sets
		form a partition of the probability space
		\[([0,M]^{X_n\times Y_n},
		\mathcal{B}([0,M]^{X_n\times Y_n}),P_n)\]
		into \((n^2)!\) equiprobable events, where
		\(\mathcal{B}([0,M]^{X_n\times Y_n})\) stands for
		the Borel \(\sigma\)-algebra of \([0,M]^{X_n\times Y_n}\).
		Thus, the bipartite random graph process can be 
		equivalently sampled from this probability space, rather
		than directly from the set of bijective \(g \colon \{1,\ldots, n^2\} \to \{1,\ldots, n\}^2\) (or equivalently, from \(S_{n^2}\)) endowed with the
		uniform probability. Let us denote
		by \(\{G_t\}_{t=0}^{n^2}\)
		a generic realization of our bipartite random
		graph process on \(X_n\cup Y_n\), and let \(\tau\)
		be the stopping time
		\[\tau := \min\{t\ : \ G_t \textrm{ has degree }1\}.\]
		That is,
		\(\tau\) is the first  instance \(t\) such that every
		\(x_i\) belongs to an edge
		and also every \(y_j\) belongs to an edge.
		Recalling now
		Definition~\ref{defn:mclambda} and the algorithm
		of section 
		\ref{subsection:findingoptimalcost}, we have:
		\begin{equation}\label{eq:keylink}
			\tau(\omega) = m_{\omega}(0) 
		\end{equation}
		for \(P_n\)-a.e.~\(\omega\in \Omega_n\).
		Denote by \(D_n\) the event that
		\(G_{\tau}\) contains a perfect matching. By
		\cite[theorem.~7.11]{bollobasrandomgraphs},
		\begin{equation}\label{eq:bollobas}
			\lim_{n\to\infty} 
			\mathbb{P}_n(D_n)
			= 1,
		\end{equation}
		which means, in words, that by the time the bipartite 
		graph achieves
		degree 1 (this is exactly the time when the
		minimizing plan \(h_{\omega}^{m_{\omega}(0)}\) is formed, by
		\eqref{eq:keylink}),
		the graph contains a perfect matching. 
		Let 
		\[
		H_n := \{\omega\in \Omega_n \ : \ 
		h_{\omega}^{m_{\omega}(0)} \textrm{ is not reduced}\}.	
		\]
		By Proposition \ref{prop:uniquereduced}, we will
		be done if we show that \(P_n(H_n)\to 1\) as 
		\(n\to\infty\). Now, the event
		\(D_n\) is the disjoint union of
		\(F_n\) and \(E_n\), where \(F_n\) is the event
		that \(G_{\tau}\) is \emph{exactly} a perfect matching,
		and \(E_n\) is the event that \(G_{\tau}\) has
		a perfect matching and at least one more edge.
		As can easily be argued, \(P_n(F_n)\to 0\) as
		\(n\to\infty\) (in fact, for $F_n$ to hold, at the last step of forming $G_\tau$ only one possibility 
		of forming an edge, or equivalently only one way of placing a zero in the respective row of the matrix, results in a perfect matching).  
		Thus, by \eqref{eq:bollobas},
		\(P_n(E_n)\to 1\) as \(n\to\infty\). On the other hand,
		the event \(E_n\) is included in \(H_n\):
		indeed, a graph in \(E_n\) corresponds to a plan
		in the support of which there is triple of indices,
		two of which are in the same column and two of which
		are in the same row, thereby
		violating Definition \ref{def_reducedplan1}.
		Therefore, \(\lim_{n\to\infty}P_n(H_n)=1\) hence concluding the proof.
	\end{proof}

	\bibliographystyle{plain}
	\bibliography{tropical-biblio}	

@book{bollobasrandomgraphs,
	author = {B. Bollob\'as},
	edition = {2},
	publisher = {Cambridge University Press},
	title = {Random Graphs},
	year = {2010}}

@article{BASAK2017426,
	author = {A. Basak and M. Rudelson},
	doi = {https://doi.org/10.1016/j.aim.2017.02.009},
	issn = {0001-8708},
	journal = {Advances in Mathematics},
	pages = {426-483},
	title = {Invertibility of sparse non-Hermitian matrices},
	url = {https://www.sciencedirect.com/science/article/pii/S0001870817300427},
	volume = {310},
	year = {2017}}

@book{kolokoltsov1997idempotent,
	title={Idempotent Analysis and Its Applications},
	author={Kolokoltsov, V. and Maslov, V.P.},
	isbn={9780792345091},
	lccn={97012167},
	series={Mathematics and Its Applications},
	url={https://books.google.it/books?id=3mzgt0a03mQC},
	year={1997},
	publisher={Springer Netherlands}
}

@book {Villani06omt,
	AUTHOR = {Villani, C.},
	TITLE = {Optimal transport: old and new},
	SERIES = {Grundlehren der mathematischen Wissenschaften},
	VOLUME = {338},
	PUBLISHER = {Springer-Verlag},
	ADDRESS = {Berlin},
	YEAR = {2008}
}

@article {GimGomor64-tsp,
	AUTHOR = {Gilmore, P. C. and Gomory, R. E.},
	TITLE = {Sequencing a one state-variable machine: {A} solvable case of
	the traveling salesman problem},
	JOURNAL = {Operations Res.},
	FJOURNAL = {Operations Research},
	VOLUME = {12},
	YEAR = {1964},
	PAGES = {655--679},
	ISSN = {0030-364X,1526-5463},
	MRCLASS = {90.30},
	MRNUMBER = {171620},
	MRREVIEWER = {H.\ F.\ Karreman},
	DOI = {10.1287/opre.12.5.655},
	URL = {https://doi.org/10.1287/opre.12.5.655},
}

@article {GarfGilb78-bottlenecktsp,
	AUTHOR = {Garfinkel, R. S. and Gilbert, K. C.},
	TITLE = {The bottleneck traveling salesman problem: algorithms and
	probabilistic analysis},
	JOURNAL = {J. Assoc. Comput. Mach.},
	FJOURNAL = {Journal of the Association for Computing Machinery},
	VOLUME = {25},
	YEAR = {1978},
	NUMBER = {3},
	PAGES = {435--448},
	ISSN = {0004-5411,1557-735X},
	MRCLASS = {90C10 (05C35 90B10)},
	MRNUMBER = {496641},
	MRREVIEWER = {M.\ M.\ Sys\l o},
	DOI = {10.1145/322077.322086},
	URL = {https://doi.org/10.1145/322077.322086},
}
\end{document}